% 03 January 2012
% A revised paper for Journal of Differential Equations
% Authors:  H. A. Erbay, A. Erkip, G. M. Muslu
% Title: The Cauchy problem for a  one-dimensional nonlinear peridynamic model
%
\documentclass[preprint,1p]{elsarticle}
\usepackage{amssymb}
\usepackage{amsmath}
\usepackage{amsthm}
\newtheorem{theorem}{Theorem}[section]

\newtheorem{lemma}[theorem]{Lemma}
\newtheorem{remark}[theorem]{Remark}

\journal{Journal of Differential Equations}

\begin{document}

\begin{frontmatter}
\title{The Cauchy Problem for a  One Dimensional Nonlinear Elastic Peridynamic Model}
\author{H. A. Erbay$^1$}
\ead{husnuata.erbay@ozyegin.edu.tr}
\author{A. Erkip$^2$}
\ead{albert@sabanciuniv.edu}
\author{G. M. Muslu$^3$\corref{cor1}}
\ead{gulcin@itu.edu.tr}
\cortext[cor1]{Corresponding author. Tel:
+90 212 285 3257 Fax: +90 212 285 6386}

\address{$^1$ Faculty of Arts and Sciences, Ozyegin  University, Cekmekoy 34794, Istanbul, Turkey}

\address{$^2$ Faculty of Engineering and Natural Sciences, Sabanci University,
        Tuzla 34956,  Istanbul,    Turkey}

\address{$^3$ Department of Mathematics, Istanbul Technical University, Maslak 34469,  Istanbul, Turkey }

\begin{abstract}
This paper studies the Cauchy problem for a one-dimensional nonlinear peridynamic model describing the dynamic
    response of an infinitely long elastic bar. The issues of local well-posedness and smoothness of the solutions
    are discussed. The existence of a global solution  is proved first in the sublinear case and then for
    nonlinearities of degree at most three. The conditions for finite-time blow-up of solutions are established.
\end{abstract}

 \begin{keyword}
 Nonlocal Cauchy problem \sep Nonlinear peridynamic equation \sep Global existence \sep Blow-up.
 \MSC[2010] 35Q74  \sep 74B20  \sep 74H20 \sep 74H35
 \end{keyword}
 \end{frontmatter}

\setcounter{equation}{0}
\section{Introduction}
\label{intro}
In this study, we consider the one-dimensional nonlinear nonlocal partial differential equation, arising in the
peridynamic modelling of an elastic bar,
\begin{equation}
    u_{tt}=\int_{\Bbb R} \alpha (y-x) w(u(y,t)-u(x,t)) dy, ~~~x\in {\Bbb R},~~~t>0 \label{cau1}
\end{equation}
with initial data
\begin{equation}
     u(x,0)=\varphi(x),~~~~~~ u_{t}(x,0)=\psi(x). \label{cau2}
\end{equation}
In (\ref{cau1})-(\ref{cau2}) the subscripts denote partial differentiation, $u=u(x,t)$ is a real-valued function,
the kernel function $\alpha$ is an integrable function on  ${\Bbb R}$ and $w$ is a twice differentiable nonlinear
function with $w(0)=0$.  We  first establish the local well-posedness of the Cauchy problem (\ref{cau1})-(\ref{cau2}),
considering four different cases of initial data: {\it (i)} continuous and bounded functions, {\it (ii)} bounded
$L^{p}$ functions $(1\leq p \leq \infty)$,  {\it (iii)} differentiable and bounded functions and {\it (iv)}  $L^{p}$
functions whose distributional derivatives are also in  $L^{p}({\Bbb R})$. We then extend the results to the case of
$L^{2}$ Sobolev spaces of arbitrary (non-integer) order for the particular form $w(\eta)=\eta^3$.
We prove global existence of solutions for two types of nonlinearities: when $w(\eta)$ is sublinear
and when $w(\eta)=| \eta |^{\nu-1} \eta $ for $\nu \le 3$. Lastly, for the general case we  provide the conditions
under which the solutions of the Cauchy problem blow-up in finite time.

Equation (\ref{cau1}) is a model proposed to describe the dynamical response of an infinite homogeneous elastic bar
within the context of the peridynamic formulation of elasticity theory. The peridynamic theory of solids, mainly
proposed by Silling \cite{silling1}, is an alternative formulation for elastic materials and  has attracted attention
of a growing number of researchers. The most important feature of the peridynamic theory is that the force acting
on a material particle, due to interaction with other particles, is written as a functional of the displacement field.
This means the peridynamic theory is a nonlocal continuum theory and regarding nonlocality it bears a strong resemblance
to more traditional theories of nonlocal elasticity, which are principally based on integral constitutive relations
\cite{kunin,rogula,eringen}. As in other nonlocal theories of elasticity, the main motivation is to propose a generalized
elasticity theory that involves the effect of long-range internal forces of molecular dynamic, neglected in the conventional
theory of elasticity. Another feature of the peridynamic theory is that the peridynamic equation of motion does not involve
spatial derivatives of the displacement field. The absence of spatial derivatives of the displacement field in the
equation of motion makes possible to use the peridynamic equations even at points of displacement discontinuity.
Furthermore, in contrast to the conventional theory of elasticity, the peridynamic theory predicts dispersive wave
propagation as a property of the medium even if the geometry does not define  a length scale.

In the peridynamic theory, by assuming a uniform cross-section and the absence of body forces, the governing equation
of an infinitely long elastic bar is given by
\begin{equation}
   \rho_{0} u_{tt}=\int_{\Bbb R} f(u(y,t)-u(x,t), y-x) dy,   \label{peridynamic1}
\end{equation}
where the axis of the bar coincides with the coordinate axis, a material point on the axis of the bar has coordinate
$x$ in the undeformed state, $u$ and $f$ may be interpreted as averages of the axial displacement and the axial
force located at any $x$ at time $t$, taken over a cross section of the bar, and $\rho_{0}$ is density of the bar
material  \cite{silling2,weckner1}. The space integral in (\ref{peridynamic1}) implies that the displacement at a
generic point is influenced by the  displacements of all particles of the bar (As commonly known, in the conventional theory of elasticity, the equation governing the dynamic response of an infinitely long bar
  is a hyperbolic partial differential
equation that does not involve such a space integral originating from the nonlocal character of the peridynamic theory).
Equation (\ref{peridynamic1}) is obtained by integrating the equation of motion for the axial displacement
over the cross-section and dividing through by the area of the cross-section.  The bar is supposed to be composed of
a homogeneous objective microelastic material \cite{silling1,silling2,weckner1} and its constitutive  behavior is described by the
function $f$. Newton's third law imposes the following restriction on the form of $f$:
\begin{equation}
    f(\eta, \zeta)=-f(-\eta, -\zeta)  \label{thirdrule}
\end{equation}
for all relative displacements $\eta = u(y,t)-u(x,t)$ and relative positions $\zeta = y-x$. For a linear peridynamic
material the constitutive relation is given by
\begin{displaymath}
        f(\eta, \zeta)=\alpha(\zeta)\eta
\end{displaymath}
where $\alpha$ is called the micromodulus function \cite{silling1,silling2,weckner1}. It follows from
(\ref{thirdrule}) that $\alpha$ must be an even function. In
\cite{silling2,weckner1} the dynamic response of a linear peridynamic bar has been investigated and some striking
observations that are not found in the classical theory of elastic bars have been made. Some results on the well-posedness of the Cauchy problem
for the linear peridynamic model have been established in \cite{emmrich1,emmrich2,emmrich3,du}.  In spite of its age, there
is quite extensive literature on the linear peridynamic theory.

It is natural to think that more interesting behavior may be observed when the attention is confined to the nonlinear
peridynamic materials. From this point of view, to the best of our knowledge, the present study
appears to be the first study on  mathematical analysis of nonlinear peridynamic equations. Techniques similar
to those in \cite{duruk1,duruk2,saadet} enable us to  answer some  basic questions, like local well-posedness and lifespan of solutions, as the groundwork of further analysis of the nonlinear peridynamic problem.

In this study we consider the case in which the constitutive behavior is described by a class of nonlinear peridynamic
models in the separable form:
\begin{equation}
   f(\eta, \zeta)=\alpha (\zeta) w(\eta) \label{peridynamic2}
\end{equation}
where $\alpha$ and $w$ are two  functions satisfying the restriction imposed by (\ref{thirdrule}). This
separable form, while allowing us to exploit the properties of convolution-based techniques, is not a serious restriction
and it just makes the proofs easier to follow. Our results can be carried over to the case of general $f(\eta, \zeta)$.
We illustrate this in Theorem \ref{theo2.5}; by imposing certain differentiability
and integrability conditions on $f$, we prove local well-posedness for the general nonlinear peridynamic problem.
Throughout this study we assume that $\alpha$ is an   integrable  even function while $w$ is a differentiable odd function so that (\ref{thirdrule})
is satisfied.

Substitution of the separable form of (\ref{peridynamic2}) into (\ref{peridynamic1}) and non-dimensionalization of the resulting equation
(or simply taking the mass density to be 1) gives the governing equation of the problem in its final form (\ref{cau1})
(Henceforth we use non-dimensional quantities but for convenience use the same symbols). The aim of this study is
three-fold:  to establish the local well-posedness of the Cauchy problem,  to investigate the
existence of a global solution,  and  to present the conditions for finite-time blow-up of solutions.

The paper is organized as follows. In Section 2,  the existence and uniqueness of the local solution for the
nonlinear Cauchy problem is proved by using the contraction mapping principle.  For initial data in fractional Sobolev
spaces the general case seems to involve technical difficulties and  in Section 3 we consider the particular case
$w(\eta)=\eta^{3}$ in the $L^{2}$ Sobolev space setting. We note that the cubic case  can be easily generalized to an arbitrary
polynomial of $\eta$. In Section 4, we consider the issue of global existence versus   finite time blow-up of solutions.
We first show that blow-up must necessarily occur in the $~L^\infty$-norm. We then prove two results on global existence
and finally establish blow-up criteria.

Throughout this paper, $C$ denotes a generic constant. We use  $~\left\Vert u \right\Vert _{\infty}~$ and $~\left\Vert u \right\Vert_{p}~$  to
denote the norms in $L^\infty ({\Bbb R})$ and $L^{p} ({\Bbb R})$ spaces, respectively. The notation
$~\left\langle u, v\right\rangle~$ denotes the inner product in  $L^{2} ({\Bbb R})$.
Furthermore, $C_{b} ({\Bbb R})$ denotes the space of continuous bounded functions on  ${\Bbb R}$, and
$C_{b}^{1} ({\Bbb R})$ is the space of differentiable functions in $C_{b} ({\Bbb R})$ whose first-order derivatives also
belong to $C_{b} ({\Bbb R})$. In the spaces $C_{b} ({\Bbb R})$ and $C_{b}^{1} ({\Bbb R})$ we have the norms
$~\left\Vert u \right\Vert_{\infty}~$ and
$~\left\Vert u \right\Vert_{1,b}=\left\Vert u \right\Vert_{\infty}+\left\Vert u^{\prime} \right\Vert_{\infty}$,
respectively, where the symbol $~^{\prime}$ denotes the differentiation. The Sobolev space $W^{1,p}({\Bbb R})$
is the space of $L^{p}$ functions whose distributional derivatives are also in $L^{p}({\Bbb R})$ with norm
$~\left\Vert u \right\Vert_{W^{1,p}}=\left\Vert u \right\Vert_{p}+\left\Vert u^{\prime} \right\Vert_{p}$. Similarly,
for integer $k\ge 1$, $C_{b}^{k} ({\Bbb R})$  denotes the space of functions whose derivatives
up to order $k$ are continuous and bounded;  $W^{k,p}({\Bbb R})$ denotes the space of $L^{p}$ functions whose derivatives
up to order $k$ are in $L^{p}({\Bbb R})$.

\setcounter{equation}{0}
\section{Local Well Posedness}
\label{sec:2}

Below we will give several versions of local well-posedness of the nonlinear
Cauchy problem given by (\ref{cau1})-(\ref{cau2}). This is achieved in Theorems \ref{theo2.1}-\ref{theo2.4} for four different
cases of initial data spaces, namely $C_{b}({\Bbb R})$, $L^{p} ({\Bbb R})\cap L^\infty ({\Bbb R})$,  $C_{b}^{1}({\Bbb R})$ and
$W^{1,p}({\Bbb R})$ $~(1\leq p \leq \infty)$. The proofs  will follow the same scheme given below.

If  (\ref{cau1}) is integrated twice with respect to $t$, the solution of the Cauchy problem satisfies the integral equation
$u=Su$ where
\begin{equation}
\hspace*{-40pt}
    (Su)(x,t)=\varphi (x)+t\psi (x)+\int_{0}^{t}(t-\tau )(Ku)(x,\tau)d\tau ,  \label{inteq}
\end{equation}
with
\begin{equation}
    (Ku)(x,t)=\int_{\Bbb R }\alpha (y-x)w(u(y,t)-u(x,t))dy.  \label{K}
\end{equation}
Let $X$ be the Banach space with norm $\Vert .\Vert_{X}$, where the initial data lie. We then define the Banach space
$X(T)=C([0,T],X)$, endowed with the norm $\Vert u\Vert_{X(T)}=\max_{t\in [0,T]}\Vert u(t)\Vert_{X}$, and the
closed $R$-ball $Y(T)=\{u\in X(T):\Vert u\Vert_{X(T)}\leq R\}$. We will show that for suitably chosen $R$ and
sufficiently small $T$, the map $S$ is a contraction on $Y(T)$. This will be achieved  by estimating first $Ku$
and then $Su$ in appropriate norms.

In each of the four cases, for $u,v\in Y(T)$ we will get estimates of the form
\begin{equation}
    \left\Vert Su\right\Vert_{X(T)}
        \leq \left\Vert \varphi \right\Vert_{X}+TJ_{1}(R,T)  \label{Sestimate}
\end{equation}
\begin{equation}
    \left\Vert \int_{0}^{t} (t-\tau)((Ku)(\tau)-(Kv)(\tau))d\tau\right\Vert _{X(T)}
        \leq T J_{2}(R,T)\left\Vert u-v\right\Vert_{X(T)}  \label{KLipestimate}
\end{equation}
and hence
\begin{equation}
    \left\Vert Su-Sv\right\Vert _{X(T)}
        \leq TJ_{2}(R,T)\left\Vert u-v\right\Vert_{X(T)}  \label{SLipestimate}
\end{equation}
with certain functions $J_{1}$ and $J_{2}$ nondecreasing in $R$ and $T$. Taking  $R\geq 2\left\Vert \varphi \right\Vert_{X}$
and then choosing $T$  small enough to satisfy
 $T J_{1}(R,T)\leq R/2$  will give $S:Y(T)\rightarrow Y(T)$; the further choice
$T J_{2}(R,T)\leq 1/2$ will show that $S$ is a contraction. This implies that there is a unique $u\in Y(T)$
satisfying the integral equation $u=Su$. But, as $Ku$ is clearly continuous in $t$, we can
differentiate (\ref{inteq}) to get
\begin{displaymath}
    u_{t}(x,t)=\psi (x)+\int_{0}^{t}(Ku)(x,\tau )d\tau  \label{ut}
\end{displaymath}
and consequently $~ u_{tt}(x,t)=(Ku)(x,t)$. This shows that $u\in C^{2}([0,T],X)$ solves (\ref{cau1})-(\ref{cau2}). Finally,
if  $u_{1}$ and $u_{2}$ satisfy (\ref{cau1})-(\ref{cau2}) with initial data $\varphi_{i}, \psi_{i}$ \ for $i=1,2$ we get
\begin{displaymath}
u_{1}-u_{2}=\varphi_{1}-\varphi_{2}+t(\psi_{1}-\psi_{2}) + \int_{0}^{t} (t-\tau) ((K{u_1})(\tau)-(K{u_2})(\tau))d\tau.
\end{displaymath}
Then the estimate (\ref{KLipestimate}) shows that
\begin{displaymath}
    \left\Vert u_{1}-u_{2}\right\Vert _{X(T)}
        \leq \left\Vert \varphi_{1}-\varphi_{2}\right\Vert_{X}+t\left\Vert \psi_{1}-\psi_{2}\right\Vert_{X}
        +TJ_{2}(R,T)\left\Vert u_{1}-u_{2}\right\Vert_{X(T)}.
\end{displaymath}
When $TJ_{2}(R,T) \leq 1/2$,
\begin{displaymath}
    \left\Vert u_{1}-u_{2}\right\Vert _{X(T)}
        \leq 2\left\Vert \varphi_{1}-\varphi_{2}\right\Vert_{X}+2t\left\Vert \psi_{1}-\psi_{2}\right\Vert_{X}
\end{displaymath}
for $t\in [0,T]$. This shows that, locally, solutions of (\ref{cau1})-(\ref{cau2})
depend continuously on initial data; thus the problem (\ref{cau1})-(\ref{cau2}) is locally well posed.

The Mean Value Theorem for nonlinear estimates  and the following lemma for convolution
estimates will be our main tools:
\begin{lemma} \label{lem2.1}
    Let $1\leq p\leq \infty $ and $f\in L^{1}({\Bbb R})$, $g\in L^{p}({\Bbb R})$. The convolution
    $(f\ast g)(x)=\int_{\Bbb R }f(y-x)g(y)dy$ is well defined and $f\ast g\in L^{p}({\Bbb R})$ with
    \begin{displaymath}
        \Vert f\ast g\Vert _{p}\leq \Vert f\Vert _{1}\Vert g\Vert _{p}.
    \end{displaymath}
\end{lemma}
In the estimates below, we will often encounter the nondecreasing function $M(R)$ defined for $R>0$ as
\begin{equation}
    M(R)=\max_{\left\vert \eta \right\vert \leq 2R}\left\vert w^{\prime }\left(\eta \right) \right\vert.  \label{MR}
\end{equation}

We now state and prove (i.e. show that the estimates (\ref{Sestimate}) and (\ref{SLipestimate}) hold) the four theorems of local well posedness.

\begin{theorem}\label{theo2.1}
    Assume that $\alpha \in L^{1}({\Bbb R})$  and $w\in C^{1}({\Bbb R})$ with $w(0)=0$. Then there is some $~T>0~$ such that the Cauchy problem
    (\ref{cau1})-(\ref{cau2}) is well posed with solution in $~C^{2}([0,T], C_{b}({\Bbb R}))~$ for initial data
    $~\varphi, \psi \in C_{b}({\Bbb R})$.
\end{theorem}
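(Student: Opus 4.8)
The overall strategy is fixed by the scheme already laid out: with $X=C_{b}(\Bbb R)$ and norm $\Vert\cdot\Vert_{\infty}$, it suffices to check that $K$ sends $X(T)$ into $X(T)$ and to establish the two estimates (\ref{Sestimate}) and (\ref{SLipestimate}); the prescribed choices $R\ge 2\Vert\varphi\Vert_{\infty}$ followed by $T$ small then close the argument through the contraction mapping principle. My starting point would be a uniform pointwise bound on the nonlinearity: for $u\in Y(T)$ one has $\Vert u\Vert_{\infty}\le R$, so $|u(y,t)-u(x,t)|\le 2R$ for all $x,y,t$, and since $w(0)=0$ the Mean Value Theorem together with (\ref{MR}) gives
\begin{displaymath}
|w(u(y,t)-u(x,t))| \le M(R)\,|u(y,t)-u(x,t)| \le 2R\,M(R),
\end{displaymath}
where $M(R)<\infty$ because $w'$ is continuous.

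Next I would insert this bound into (\ref{K}) and integrate against $|\alpha|$, using $\alpha\in L^{1}(\Bbb R)$ (the $L^{1}$-$L^{\infty}$ instance of Lemma \ref{lem2.1}), to obtain $|(Ku)(x,t)|\le 2R\,M(R)\,\Vert\alpha\Vert_{1}$, i.e. $\Vert Ku\Vert_{X(T)}\le 2R\,M(R)\,\Vert\alpha\Vert_{1}$. Substituting into (\ref{inteq}) and using $\int_{0}^{t}(t-\tau)\,d\tau\le T^{2}/2$ yields (\ref{Sestimate}) with $J_{1}(R,T)=\Vert\psi\Vert_{\infty}+T\,R\,M(R)\,\Vert\alpha\Vert_{1}$, nondecreasing in $R$ and $T$. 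For the Lipschitz bound I would take $u,v\in Y(T)$ and apply the Mean Value Theorem to $w(u(y,t)-u(x,t))-w(v(y,t)-v(x,t))$; since both arguments lie in $[-2R,2R]$,
\begin{displaymath}
|w(u(y,t)-u(x,t))-w(v(y,t)-v(x,t))| \le M(R)\bigl(|u(y,t)-v(y,t)|+|u(x,t)-v(x,t)|\bigr) \le 2M(R)\Vert u-v\Vert_{X(T)}.
\end{displaymath}
Integrating against $|\alpha|$ gives $\Vert Ku-Kv\Vert_{X(T)}\le 2M(R)\Vert\alpha\Vert_{1}\Vert u-v\Vert_{X(T)}$, and one further time integration produces (\ref{KLipestimate}) and hence (\ref{SLipestimate}) with $J_{2}(R,T)=T\,M(R)\,\Vert\alpha\Vert_{1}$.

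The one place demanding genuine care --- the step specific to the $C_{b}$ setting rather than a mechanical bound --- is confirming that $Ku$ is actually an element of $X(T)=C([0,T],C_{b}(\Bbb R))$, meaning $(Ku)(\cdot,t)$ is bounded and continuous in $x$ and $t\mapsto(Ku)(\cdot,t)$ is continuous into $C_{b}$. Boundedness is the estimate above. For continuity in $x$ the clean route is to change variables $z=y-x$ in (\ref{K}), writing $(Ku)(x,t)=\int_{\Bbb R}\alpha(z)\,w(u(x+z,t)-u(x,t))\,dz$; as $x\to x_{0}$ the integrand converges pointwise in $z$ by continuity of $u$ and $w$ and is dominated by $2R\,M(R)\,|\alpha(z)|\in L^{1}(\Bbb R)$, so dominated convergence gives the claim, and the same domination handles continuity in $t$ uniformly in $x$, whence $Su\in X(T)$. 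With $J_{1},J_{2}$ in hand the scheme's recipe ($R\ge 2\Vert\varphi\Vert_{\infty}$, then $T$ with $TJ_{1}\le R/2$ and $TJ_{2}\le 1/2$) produces the unique fixed point and the asserted well-posedness. I expect this continuity verification to be the only non-routine point; the estimates themselves are immediate once the uniform bound on $w$ is established.
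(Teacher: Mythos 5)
Your proof is correct and follows essentially the same route as the paper: the contraction scheme with $X=C_{b}({\Bbb R})$, the Mean Value Theorem bound through $M(R)$ from (\ref{MR}), and the $L^{1}$--$L^{\infty}$ convolution estimate of Lemma \ref{lem2.1}, arriving at the same $J_{1}(R,T)=\Vert\psi\Vert_{\infty}+TRM(R)\Vert\alpha\Vert_{1}$ and $J_{2}(R,T)=TM(R)\Vert\alpha\Vert_{1}$. The only (harmless) differences are that you use the crude uniform bound $2RM(R)$ where the paper keeps the pointwise form $M(\Vert u\Vert_{\infty})(|u(y)|+|u(x)|)$, which it recycles for the $L^{p}$ estimates of Theorem \ref{theo2.2}, and that you spell out via dominated convergence (plus your Lipschitz bound for $K$, which indeed gives the uniformity in $x$ of the continuity in $t$) the membership $Su\in X(T)$ that the paper simply asserts as clear.
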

\begin{proof}
    Take $X=C_{b}({\Bbb R})$. For $u\in Y(T)$,  clearly $Ku$ is continuous in $x$ and $t$ and hence $Su\in C^{2}([0,T],X)$.
    Since $w(0)=0$ and
    \begin{displaymath}
    \left\vert u(y,t)-u(x,t)\right\vert \leq 2\Vert u(t)\Vert_{\infty},
    \end{displaymath}
    the Mean Value Theorem implies
    \begin{displaymath}
        |w(u(y)-u(x))|\leq \sup_{|\eta |\leq 2\Vert u\Vert_{\infty }}|w^{\prime}(\eta)|~|u(y)-u(x)|
        = M(\Vert u\Vert_{\infty })(|u(y)|+|u(x)|),
    \end{displaymath}
    where we have suppressed the $t$ variable for convenience. Then
    \begin{eqnarray}
        \left\vert (Ku)(x,t)\right\vert
        &\leq & M(\Vert u(t)\Vert_{\infty})\int_{\Bbb R}
            \left\vert \alpha (y-x)\right\vert (|u(y,t)|+|u(x,t)|)dy  \nonumber \\
        &=& M(\Vert u(t)\Vert_{\infty})\left[ \left( \left\vert
            \alpha \right\vert \ast \left\vert u\right\vert \right)(x,t)+\Vert \alpha \Vert_{1}|u(x,t)|\right],
            \label{Kbound}
    \end{eqnarray}
    and
    \begin{equation}
    \left\Vert (Ku)(t)\right\Vert_{\infty }
        \leq 2 M(\Vert u(t)\Vert_{\infty})\Vert \alpha \Vert_{1}
            \Vert u(t)\Vert_{\infty} \label{kuinf}
    \end{equation}
    where we have used Lemma \ref{lem2.1}.
    Then
    \begin{displaymath}
        \left\vert (Su)(x,t)\right\vert
        \leq \left\vert \varphi (x)\right\vert +t\left\vert \psi (x)\right\vert
            +\int_{0}^{t}(t-\tau)\left\vert (Ku)(x,\tau)\right\vert d\tau,
    \end{displaymath}
    and
    \begin{equation}
        \left\Vert (Su)(t)\right\Vert_{\infty }
        \leq \Vert \varphi \Vert_{\infty}+t\Vert \psi \Vert_{\infty}
            +2\Vert \alpha \Vert_{1}\int_{0}^{t}(t-\tau )M(\Vert u(\tau)\Vert_{\infty})\Vert u(\tau)\Vert_{\infty}d\tau.
            \label{265}
    \end{equation}
     As $u\in Y(T)$, this gives $M(\Vert u(\tau)\Vert _{\infty})\leq M(R)$ and hence
    \begin{eqnarray}
        \left\Vert Su\right\Vert_{X(T)}
        &\leq & \Vert \varphi \Vert_{\infty}+T\Vert \psi \Vert_{\infty }
            +2M(R)\Vert \alpha \Vert_{1}\Vert u\Vert_{X(T)}\sup_{t\in [0,T]}\int_{0}^{t}(t-\tau )d\tau  \nonumber \\
        &\leq & \Vert \varphi \Vert_{\infty}+T\Vert \psi \Vert_{\infty}+M(R)R\Vert \alpha \Vert_{1}T^{2}.
        \label{infestimate}
    \end{eqnarray}
    This proves (\ref{Sestimate}) with $J_{1}(R,T)=\Vert \psi \Vert_{\infty}+M(R)R\Vert \alpha \Vert_{1}T$.
    Now let $u, v\in Y(T)$. We start by estimating $Ku-Kv$. Again suppressing $t$,
    \begin{displaymath}
        |w(u(y)-u(x))-w(v(y)-v(x))|\leq M(R)(|u(y)-v(y)|+|u(x)-v(x)|),
    \end{displaymath}
    and
    \begin{eqnarray}
        \left\vert (Ku)(x,t)-(Kv)(x,t)\right\vert
        &\leq & M(R)\left( \left\vert\alpha \right\vert \ast \left\vert u-v\right\vert \right)(x,t) \nonumber  \\
        & & +M(R)\Vert \alpha \Vert_{1}|u(x,t)-v(x,t)|. \label{Klipinf}
    \end{eqnarray}
    Similar to (\ref{infestimate}) we get
    \begin{equation}
        \left\Vert (Su)(t)-(Sv)(t)\right\Vert_{\infty }
        \leq  2M(R)\Vert \alpha \Vert_{1}\int_{0}^{t}(t-\tau )\Vert u(\tau )-v(\tau )\Vert_{\infty }~d\tau  \label{285}
    \end{equation}
    and
    \begin{equation}
        \left\Vert Su-Sv\right\Vert_{X(T)}\leq M(R)\Vert \alpha \Vert_{1}T^{2}\Vert u-v\Vert_{X(T)} \label{inflipestimate}
    \end{equation}
    which proves (\ref{SLipestimate}) with $J_{2}(R,T)=M(R)\Vert \alpha \Vert_{1}T$. According to the scheme described above,
    this completes the proof.
\end{proof}

\begin{theorem}\label{theo2.2}
    Let $1\leq p\leq \infty$. Assume that $\alpha \in L^{1}({\Bbb R})$  and $w\in C^{1}({\Bbb R})$ with $w(0)=0$. Then there is some $~T>0~$ such that
    the Cauchy problem (\ref{cau1})-(\ref{cau2}) is well posed with solution in
    $~C^{2}\left( [0,T], L^{p}({\Bbb R})\cap L^{\infty}({\Bbb R})\right)~$ for initial data
    $~\varphi, \psi \in L^{p}({\Bbb R})\cap L^{\infty}({\Bbb R})$.
\end{theorem}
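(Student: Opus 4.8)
The plan is to follow the general scheme verbatim, taking $X=L^{p}({\Bbb R})\cap L^{\infty}({\Bbb R})$ with the norm $\Vert u\Vert_{X}=\Vert u\Vert_{p}+\Vert u\Vert_{\infty}$, and simply to supplement the $L^{\infty}$ estimates already obtained in Theorem \ref{theo2.1} with parallel $L^{p}$ estimates. The whole argument reduces to verifying (\ref{Sestimate}) and (\ref{SLipestimate}) in this combined norm. The first thing I would note is that for $u\in Y(T)$ one has $\Vert u(t)\Vert_{\infty}\leq \Vert u(t)\Vert_{X}\leq R$, so that $M(\Vert u(t)\Vert_{\infty})\leq M(R)$ exactly as before. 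This is the one place where the $L^{\infty}$ component of $X$ is genuinely required: the Mean Value Theorem bound $|w(u(y)-u(x))|\leq M(\Vert u\Vert_{\infty})|u(y)-u(x)|$ needs pointwise control of $u$, so an $L^{p}$ setting alone would not suffice to tame the nonlinearity.

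Since the pointwise inequality (\ref{Kbound}),
\begin{displaymath}
|(Ku)(x,t)|\leq M(\Vert u(t)\Vert_{\infty})\left[ (|\alpha|\ast |u|)(x,t)+\Vert \alpha\Vert_{1}|u(x,t)|\right],
\end{displaymath}
holds unchanged, I would take the $L^{p}$ norm in $x$ and apply Lemma \ref{lem2.1} to the convolution term to obtain
\begin{displaymath}
\Vert (Ku)(t)\Vert_{p}\leq 2M(\Vert u(t)\Vert_{\infty})\Vert \alpha\Vert_{1}\Vert u(t)\Vert_{p}.
\end{displaymath}
Adding this to the $L^{\infty}$ bound (\ref{kuinf}) yields the single combined estimate $\Vert (Ku)(t)\Vert_{X}\leq 2M(\Vert u(t)\Vert_{\infty})\Vert \alpha\Vert_{1}\Vert u(t)\Vert_{X}$, which has the same form as in Theorem \ref{theo2.1} but with the $X$ norm throughout.

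Next I would feed this into the integral representation (\ref{inteq}). Applying the triangle and Minkowski integral inequalities in the $X$ norm to $(Su)(t)=\varphi+t\psi+\int_{0}^{t}(t-\tau)(Ku)(\tau)d\tau$, and then using $M(\Vert u(\tau)\Vert_{\infty})\leq M(R)$ together with $\Vert u(\tau)\Vert_{X}\leq R$, produces
\begin{displaymath}
\Vert Su\Vert_{X(T)}\leq \Vert \varphi\Vert_{X}+T\Vert \psi\Vert_{X}+M(R)R\Vert \alpha\Vert_{1}T^{2},
\end{displaymath}
which is (\ref{Sestimate}) with $J_{1}(R,T)=\Vert \psi\Vert_{X}+M(R)R\Vert \alpha\Vert_{1}T$. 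For the contraction estimate I would start from the pointwise Lipschitz bound (\ref{Klipinf}), take $L^{p}$ norms using Lemma \ref{lem2.1} and combine with the corresponding $L^{\infty}$ bound to get $\Vert (Ku)(t)-(Kv)(t)\Vert_{X}\leq 2M(R)\Vert \alpha\Vert_{1}\Vert u(t)-v(t)\Vert_{X}$; the same integration then gives (\ref{SLipestimate}) with $J_{2}(R,T)=M(R)\Vert \alpha\Vert_{1}T$.

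I do not expect a serious obstacle here; the proof is essentially a transcription of Theorem \ref{theo2.1} once Lemma \ref{lem2.1} is invoked for the $L^{p}$ component. The only point deserving a little care is the continuity of $t\mapsto (Ku)(t)$ in the $X$ norm, needed so that $Su\in C([0,T],X)$ and the general scheme applies; this follows from the continuity of $u$ together with the Lipschitz-type estimate just derived. With these estimates in hand, the general scheme yields $S:Y(T)\rightarrow Y(T)$ a contraction for suitable $R$ and small $T$, hence a unique fixed point $u\in C^{2}([0,T],X)$ solving (\ref{cau1})--(\ref{cau2}) and depending continuously on the initial data.
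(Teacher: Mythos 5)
Your proof is correct and follows essentially the same route as the paper's: the same space $X=L^{p}({\Bbb R})\cap L^{\infty}({\Bbb R})$ with the sum norm, the same use of Lemma \ref{lem2.1} to convert the pointwise bounds (\ref{Kbound}) and (\ref{Klipinf}) into $L^{p}$ estimates, Minkowski's integral inequality, and addition to the $L^{\infty}$ estimates of Theorem \ref{theo2.1}, yielding the same $J_{1}$ and $J_{2}$. Your extra remarks on where the $L^{\infty}$ component is genuinely needed and on the continuity of $t\mapsto (Ku)(t)$ are accurate refinements, not deviations.
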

\begin{proof}
    Let $X=L^{p}({\Bbb R})\cap L^{\infty}({\Bbb R})$ with norm
    $\Vert u\Vert_{X}=\Vert u\Vert_{p}+\Vert u\Vert_{\infty}$. As we already have the $L^{\infty }$ estimates given in
    (\ref{265}) and (\ref{285}), we now look for the corresponding $L^{p}$ estimates.  Lemma \ref{lem2.1} implies
    $\left\Vert \left( \left\vert \alpha \right\vert \ast \left\vert u\right\vert \right) (t)\right\Vert_{p}
    \leq \Vert \alpha \Vert _{1}|\left\Vert u(t)\right\Vert_{p}$ so
    \begin{equation}
    \left\Vert (Ku)(t)\right\Vert_{p }
            \leq 2 M(\Vert u(t)\Vert_{\infty})\Vert \alpha \Vert_{1}
            \Vert u(t)\Vert_{p} \label{kup}
    \end{equation}
     and Minkowski's inequality for integrals will yield
    \begin{equation}
        \left\Vert (Su)(t)\right\Vert_{p}
        \leq \Vert \varphi \Vert_{p}+t\Vert \psi \Vert_{p}+2\Vert
            \alpha \Vert_{1}\int_{0}^{t}(t-\tau )M(\Vert u(\tau)\Vert_{\infty})\Vert u(\tau)\Vert_{p}~d\tau.
            \label{210}
    \end{equation}
    Adding this to  the $L^{\infty}$ estimate  (\ref{265}), we get
    \begin{displaymath}
        \left\Vert Su\right\Vert_{X(T)}
        \leq \Vert \varphi \Vert_{X}+T \Vert \psi \Vert_{X}+M(R)R\Vert \alpha \Vert_{1}T^{2}.
    \end{displaymath}
    Similarly we have
    \begin{equation}
        \left\Vert (Su)(t)-(Sv)(t)\right\Vert_{p}
        \leq 2M(R)\Vert \alpha \Vert_{1}\int_{0}^{t}(t-\tau )\Vert u(\tau )-v(\tau)\Vert_{p}~d\tau.
            \label{211}
    \end{equation}
  Adding this to (\ref{285}) gives
    \begin{displaymath}
        \left\Vert Su-Sv\right\Vert_{X(T)}
        \leq M(R)\Vert \alpha \Vert_{1}T^{2}\Vert u-v\Vert _{X(T)}
    \end{displaymath}
    and concludes the proofs of (\ref{Sestimate}) and (\ref{SLipestimate}).
\end{proof}
\begin{theorem}\label{theo2.3}
    Assume that $\alpha \in L^{1}({\Bbb R})$  and $w\in C^{2}({\Bbb R})$ with $w(0)=0$. Then there is some $~T>0~$ such that the Cauchy problem
    (\ref{cau1})-(\ref{cau2}) is well posed with solution in $~C^{2}([0,T], C_{b}^{1}({\Bbb R}))~$ for
    initial data $~\varphi ,\psi \in C_{b}^{1}({\Bbb R})$.
\end{theorem}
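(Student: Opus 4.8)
The plan is to run the general contraction scheme with $X=C_{b}^{1}(\Bbb R)$ and the norm $\Vert u\Vert_{1,b}=\Vert u\Vert_{\infty}+\Vert u^{\prime}\Vert_{\infty}$, so it suffices to verify (\ref{Sestimate}) and (\ref{SLipestimate}). The zeroth-order part of the $C_{b}^{1}$ norm is already controlled by the bounds (\ref{kuinf}) and (\ref{Klipinf}) obtained in the proof of Theorem \ref{theo2.1}, which hold verbatim here. Thus the only genuinely new task is to estimate the spatial derivative $(Ku)^{\prime}$ and its Lipschitz dependence on $u$.

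First I would compute $(Ku)^{\prime}$. Since $\alpha$ is assumed only integrable, the kernel $\alpha(y-x)$ cannot be differentiated in $x$; to get around this I would substitute $z=y-x$ and write $(Ku)(x,t)=\int_{\Bbb R}\alpha(z)\,w(u(x+z,t)-u(x,t))\,dz$, where the kernel no longer depends on $x$. Differentiating under the integral sign---legitimate by dominated convergence, since $u(\cdot,t)\in C_{b}^{1}(\Bbb R)$ makes the $x$-derivative of the integrand dominated by $2M(\Vert u\Vert_{\infty})\Vert u^{\prime}\Vert_{\infty}|\alpha(z)|\in L^{1}$---gives
\begin{displaymath}
(Ku)^{\prime}(x,t)=\int_{\Bbb R}\alpha(z)\,w^{\prime}(u(x+z,t)-u(x,t))\,(u_{x}(x+z,t)-u_{x}(x,t))\,dz.
\end{displaymath}
Using $|w^{\prime}(\cdot)|\leq M(\Vert u\Vert_{\infty})$, the bound $|u_{x}(x+z,t)-u_{x}(x,t)|\leq 2\Vert u^{\prime}(t)\Vert_{\infty}$, and Lemma \ref{lem2.1}, I obtain $\Vert (Ku)^{\prime}(t)\Vert_{\infty}\leq 2M(\Vert u\Vert_{\infty})\Vert\alpha\Vert_{1}\Vert u^{\prime}(t)\Vert_{\infty}$. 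Adding this to (\ref{kuinf}) yields $\Vert (Ku)(t)\Vert_{1,b}\leq 2M(\Vert u\Vert_{\infty})\Vert\alpha\Vert_{1}\Vert u(t)\Vert_{1,b}$; since $\varphi,\psi\in C_{b}^{1}$ and $(Su)^{\prime}=\varphi^{\prime}+t\psi^{\prime}+\int_{0}^{t}(t-\tau)(Ku)^{\prime}d\tau$, repeating the computation leading to (\ref{infestimate}) establishes (\ref{Sestimate}) with $J_{1}(R,T)=\Vert\psi\Vert_{1,b}+M(R)R\Vert\alpha\Vert_{1}T$.

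The main obstacle is the Lipschitz estimate for the derivative. I must bound $(Ku)^{\prime}-(Kv)^{\prime}$, namely the integral against $\alpha(z)$ of the difference of the products $w^{\prime}(u(x+z)-u(x))(u_{x}(x+z)-u_{x}(x))$ and $w^{\prime}(v(x+z)-v(x))(v_{x}(x+z)-v_{x}(x))$. I would split this by adding and subtracting $w^{\prime}(u(x+z)-u(x))(v_{x}(x+z)-v_{x}(x))$. The first resulting term is controlled by $2M(R)\Vert(u-v)^{\prime}\Vert_{\infty}$; the second requires estimating $w^{\prime}(u(x+z)-u(x))-w^{\prime}(v(x+z)-v(x))$, and it is precisely here that the hypothesis $w\in C^{2}$ (rather than merely $C^{1}$) is needed. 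Setting $N(R)=\max_{|\eta|\leq 2R}|w^{\prime\prime}(\eta)|$, the Mean Value Theorem applied to $w^{\prime}$ gives $|w^{\prime}(u(x+z)-u(x))-w^{\prime}(v(x+z)-v(x))|\leq 2N(R)\Vert u-v\Vert_{\infty}$, while $|v_{x}(x+z)-v_{x}(x)|\leq 2R$. Combining, $\Vert(Ku)^{\prime}(t)-(Kv)^{\prime}(t)\Vert_{\infty}\leq 2\Vert\alpha\Vert_{1}\big(M(R)\Vert(u-v)^{\prime}(t)\Vert_{\infty}+2RN(R)\Vert(u-v)(t)\Vert_{\infty}\big)$, which together with (\ref{Klipinf}) gives $\Vert(Ku)(t)-(Kv)(t)\Vert_{1,b}\leq C(R)\Vert(u-v)(t)\Vert_{1,b}$ for a constant $C(R)$ built from $M(R)$ and $RN(R)$ and nondecreasing in $R$. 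Integrating in $\int_{0}^{t}(t-\tau)\,d\tau$ as before establishes (\ref{SLipestimate}) with $J_{2}(R,T)=\tfrac{1}{2}C(R)T$, and the contraction scheme completes the proof. Finally, the continuity of $u$ as a $C_{b}^{1}$-valued function guarantees $Ku\in C([0,T],C_{b}^{1})$, so that the differentiation in $t$ carried out in the scheme indeed produces a solution in $C^{2}([0,T],C_{b}^{1}(\Bbb R))$.
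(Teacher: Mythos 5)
Your proposal is correct and takes essentially the same route as the paper's proof: the same substitution $z=y-x$ to differentiate $Ku$ under the integral sign, the same add-and-subtract decomposition bounding $\left\vert w^{\prime}(\eta_{1})\mu_{1}-w^{\prime}(\eta_{2})\mu_{2}\right\vert$ by $M(R)\left\vert \mu_{1}-\mu_{2}\right\vert+2RN(R)\left\vert \eta_{1}-\eta_{2}\right\vert$ with $N(R)=\max_{\left\vert \eta\right\vert \leq 2R}\left\vert w^{\prime\prime}(\eta)\right\vert$, and the same reuse of the zeroth-order estimates from Theorem \ref{theo2.1}. Your explicit dominated-convergence justification for differentiating under the integral is a small addition the paper leaves implicit; otherwise the two arguments coincide up to immaterial constants.
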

\begin{proof}
    We now take $X=C_{b}^{1}({\Bbb R})$ for which the norm is
    $\Vert u\Vert_{1,b}=\Vert u\Vert_{\infty}+\Vert u^{\prime}\Vert_{\infty}$. Since we have the sup norm
    estimates (\ref{265}) and (\ref{285})  all we need is estimates for their $x$ derivatives.
    Throughout this proof we will suppress $t$ (or $\tau$) to keep the expressions shorter, whenever it is clear from the
    context. Differentiating (\ref{K}) gives
    \begin{eqnarray*}
        \frac{\partial }{\partial x}(Ku)(x)
        &=& \frac{\partial }{\partial x}\int_{\Bbb R}\alpha (y-x)w(u(y)-u(x))dy \\
        &=& \frac{\partial }{\partial x}\int_{\Bbb R}\alpha (z)w(u(x+z)-u(x))dz \\
        &=& \int_{\Bbb R}\alpha (z)w^{\prime}(u(x+z)-u(x))(u_{x}(x+z)-u_{x}(x))dz \\
        &=& \int_{\Bbb R}\alpha (y-x)w^{\prime}(u(y)-u(x))(u_{x}(y)-u_{x}(x))dy.
    \end{eqnarray*}
    Recall that $|w^{\prime}(u(y)-u(x))|\leq M(\Vert u(t)\Vert_{\infty})$ due to (\ref{MR}). Then
    \begin{eqnarray}
        |(Ku)_{x}(x)|
        &\leq & M(\Vert u\Vert_{\infty})\int_{\Bbb R}\left\vert \alpha (y-x)
            \right\vert (|u_{x}(y)|+|u_{x}(x)|)dy  \nonumber \\
        &\leq & M(\Vert u\Vert_{\infty})[\left( |\alpha |\ast |u_{x}|\right) (x)
            +\Vert \alpha \Vert_{1}|u_{x}(x)|].  \label{Kx}
    \end{eqnarray}
    Since
    \begin{equation}
        \left\vert (Su)_{x}(x,t)\right\vert
        \leq  \left\vert \varphi^{\prime}(x)\right\vert
            +t\left\vert \psi ^{\prime }(x)\right\vert +\int_{0}^{t}(t-\tau )~|(Ku)_{x}(x,\tau)|d\tau ,  \label{sux} \
    \end{equation}
    we have
    \begin{displaymath}
        \left\Vert (Su)_{x}(t)\right\Vert_{\infty }
        \leq  \Vert \varphi^{\prime}\Vert _{\infty }+t\Vert \psi^{\prime}\Vert_{\infty}
            +2 \Vert \alpha \Vert_{1}\int_{0}^{t}(t-\tau)M(\Vert u(\tau)\Vert_{\infty })\Vert u_{x}(\tau)\Vert_{\infty }d\tau.
    \end{displaymath}
    But $M(\Vert u(\tau)\Vert_{\infty })\leq M(R)$ so adding up with the estimate (\ref{265}) proves (\ref{Sestimate})
    \begin{eqnarray*}
        \left\Vert Su\right\Vert_{X(T)}
        & = & \max_{t\in [ 0,T]}(\Vert (Su)(t)\Vert_{\infty}+\Vert (Su)_{x}(t)\Vert_{\infty}) \\
        &\leq & \Vert \varphi \Vert_{1,b}
            +T\Vert \psi \Vert_{1,b}+M(R)R\Vert \alpha \Vert_{1}T^{2}.
    \end{eqnarray*}
    Next, for $\left\vert \eta_{i}\right\vert \leq 2R$ and $\left\vert \mu_{i}\right\vert \leq 2R$ for $(i=1,2)$, we estimate
    \begin{eqnarray*}
        \left\vert w^{\prime}\left( \eta_{1}\right) \mu_{1}-w^{\prime}\left(\eta_{2}\right) \mu_{2}\right\vert
        &\leq & \left\vert w^{\prime}\left( \eta_{1}\right) \right\vert \left\vert \mu_{1}-\mu_{2}\right\vert
            +\left\vert w^{\prime}\left( \eta_{1}\right) -w^{\prime}\left( \eta_{2}\right) \right\vert \left\vert \mu_{2}\right\vert \\
        &\leq & M(R)\left\vert \mu_{1}-\mu_{2}\right\vert
            +2R\max_{\eta \leq 2R}\left\vert w^{\prime\prime}\left( \eta \right) \right\vert \left\vert\eta_{1}-\eta_{2}\right\vert \\
        &\leq & M(R)\left\vert \mu_{1}-\mu_{2}\right\vert +2RN(R)\left\vert \eta_{1}-\eta_{2}\right\vert
        \end{eqnarray*}
        where $N(R)=\max_{\eta \leq 2R}\left\vert w^{\prime\prime}\left( \eta \right) \right\vert$. Then
    \begin{eqnarray}
        |(Ku-Kv)_{x}(x)|
        &\leq & M(R)\int_{\Bbb R}\left\vert \alpha (y-x)
            \right\vert (|u_{x}(y)-v_{x}(y)|+|u_{x}(x)-v_{x}(x)|)dy  \nonumber \\
        & & +2RN(R)\int_{\Bbb R}\left\vert \alpha (y-x)\right\vert (|u(y)-v(y)|+|u(x)-v(x)|)dy  \nonumber \\
    &\leq & M(R)(\left( |\alpha |\ast |u_{x}-v_{x}|\right) (x)+\Vert \alpha \Vert_{1}|u_{x}(x)-v_{x}(x)|) \nonumber  \\
    & & +2RN(R)(\left( |\alpha |\ast |u-v|\right) (x)+\Vert \alpha \Vert_{1}|u(x)-v(x)|)  \label{Lip_x}
    \end{eqnarray}
    and
    \begin{eqnarray}
        \Vert (Su-Sv)_{x}(t)\Vert _{\infty}
        &\leq & 2M(R)\Vert \alpha \Vert_{1}\int_{0}^{t}(t-\tau)\Vert u_{x}(\tau)-v_{x}(\tau)\Vert_{\infty}d\tau  \nonumber \\
        & & +4RN(R)\Vert \alpha \Vert_{1}\int_{0}^{t}(t-\tau)\Vert u(\tau)-v(\tau)\Vert_{\infty}d\tau  \nonumber \\
        &\leq & \left( M(R)+2RN(R)\right) \Vert \alpha \Vert_{1}T^{2}\Vert u-v\Vert_{X(T)}   \label{LipS_x}
    \end{eqnarray}
    Finally, adding this to  (\ref{285}) we get (\ref{SLipestimate}) in the form
    \begin{eqnarray*}
        \left\Vert Su-Sv\right\Vert_{X(T)}
        &\leq & \max_{t\in [0,T]}(\Vert (Su-Sv)(t)\Vert_{\infty}+\Vert (Su-Sv)_{x}(t)\Vert_{\infty}) \\
        &\leq & 2 \left( M(R)+RN(R)\right) \Vert \alpha \Vert_{1}T^{2}\Vert u-v\Vert_{X(T)}.
    \end{eqnarray*}
\end{proof}

\begin{theorem}\label{theo2.4}
    Let $1\leq p\leq \infty$. Assume that $\alpha \in L^{1}({\Bbb R})$  and $w\in C^{2}({\Bbb R})$ with $w(0)=0$. Then there is some $~T>0~$ such
    that the Cauchy problem (\ref{cau1})-(\ref{cau2}) is well posed with solution in
    $~C^{2}([0,T], W^{1,p}({\Bbb R}))~$ for initial data $~\varphi, \psi \in W^{1,p}({\Bbb R})$.
\end{theorem}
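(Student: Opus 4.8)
The plan is to take $X=W^{1,p}({\Bbb R})$ with norm $\Vert u\Vert_{W^{1,p}}=\Vert u\Vert_{p}+\Vert u^{\prime}\Vert_{p}$ and to verify the two master estimates (\ref{Sestimate}) and (\ref{SLipestimate}), recycling the $L^{p}$ computation from Theorem \ref{theo2.2} and the derivative computation from Theorem \ref{theo2.3}. The decisive structural fact is the one--dimensional Sobolev embedding $W^{1,p}({\Bbb R})\hookrightarrow L^{\infty}({\Bbb R})$, valid for every $1\le p\le \infty$ (for $p=1$ one simply writes $u(x)=\int_{-\infty}^{x}u^{\prime}(s)\,ds$, giving $\Vert u\Vert_{\infty}\le \Vert u^{\prime}\Vert_{1}$). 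Thus there is a constant $C_{0}$ with $\Vert u\Vert_{\infty}\le C_{0}\Vert u\Vert_{W^{1,p}}$, so for $u\in Y(T)$ we have $\Vert u(t)\Vert_{\infty}\le C_{0}R$ and hence $|u(y)-u(x)|\le 2C_{0}R$. Consequently the pointwise bounds $|w^{\prime}(u(y)-u(x))|\le M(C_{0}R)$ and, with $N(R)=\max_{|\eta|\le 2R}|w^{\prime\prime}(\eta)|$ as in the proof of Theorem \ref{theo2.3}, $|w^{\prime\prime}(u(y)-u(x))|\le N(C_{0}R)$ are available; the only change from the earlier proofs is the harmless replacement of the argument $R$ by $C_{0}R$ inside the nondecreasing functions $M$ and $N$.

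For the first estimate, the $L^{p}$ bound on $Ku$ is identical to (\ref{kup}), yielding $\Vert (Ku)(t)\Vert_{p}\le 2M(C_{0}R)\Vert \alpha\Vert_{1}\Vert u(t)\Vert_{p}$. For the derivative I would start from the formula $(Ku)_{x}(x)=\int_{\Bbb R}\alpha(y-x)w^{\prime}(u(y)-u(x))(u_{x}(y)-u_{x}(x))\,dy$ established in the proof of Theorem \ref{theo2.3}, bound $|w^{\prime}|$ by $M(C_{0}R)$, and apply Lemma \ref{lem2.1} to the resulting convolution in the $L^{p}$ norm rather than the sup norm, obtaining $\Vert (Ku)_{x}(t)\Vert_{p}\le 2M(C_{0}R)\Vert \alpha\Vert_{1}\Vert u_{x}(t)\Vert_{p}$. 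Adding the two gives $\Vert (Ku)(t)\Vert_{W^{1,p}}\le 2M(C_{0}R)\Vert \alpha\Vert_{1}\Vert u(t)\Vert_{W^{1,p}}$, and Minkowski's integral inequality then produces (\ref{Sestimate}) exactly as in Theorem \ref{theo2.2}.

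The main obstacle is the Lipschitz estimate for the derivative term. The argument in Theorem \ref{theo2.3} rested on the pointwise bound $|u_{x}(y)-u_{x}(x)|\le 2R$, but in $W^{1,p}$ with $p<\infty$ the derivative lies only in $L^{p}$ and is not controlled pointwise, so that algebraic estimate is no longer available. The remedy is to split the integrand of $(Ku)_{x}-(Kv)_{x}$, writing $a=u(y)-u(x)$, $b=v(y)-v(x)$, as
\begin{eqnarray*}
    w^{\prime}(a)(u_{x}(y)-u_{x}(x))-w^{\prime}(b)(v_{x}(y)-v_{x}(x))
        & = & w^{\prime}(a)\big((u_{x}-v_{x})(y)-(u_{x}-v_{x})(x)\big) \\
        & & +\;\big(w^{\prime}(a)-w^{\prime}(b)\big)(v_{x}(y)-v_{x}(x)).
\end{eqnarray*}
In the first term $|w^{\prime}(a)|\le M(C_{0}R)$, so Lemma \ref{lem2.1} in $L^{p}$ controls it by $2M(C_{0}R)\Vert \alpha\Vert_{1}\Vert u_{x}-v_{x}\Vert_{p}$. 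In the second term the potentially unbounded factor $v_{x}(y)-v_{x}(x)$ must be kept inside the convolution, where only its $L^{p}$ norm is needed; the bounded factor is extracted through $|w^{\prime}(a)-w^{\prime}(b)|\le N(C_{0}R)|a-b|\le 2N(C_{0}R)\Vert u-v\Vert_{\infty}\le 2C_{0}N(C_{0}R)\Vert u-v\Vert_{W^{1,p}}$, which is constant in the $x$--integration, and Lemma \ref{lem2.1} then bounds the remaining convolution $(|\alpha|\ast|v_{x}|)+\Vert \alpha\Vert_{1}|v_{x}|$ in $L^{p}$ by $2\Vert \alpha\Vert_{1}\Vert v_{x}\Vert_{p}\le 2\Vert \alpha\Vert_{1}R$.

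Combining the two contributions gives
\begin{displaymath}
    \Vert (Ku)_{x}(t)-(Kv)_{x}(t)\Vert_{p}\le 2\big(M(C_{0}R)+2C_{0}R\,N(C_{0}R)\big)\Vert \alpha\Vert_{1}\Vert u(t)-v(t)\Vert_{W^{1,p}},
\end{displaymath}
and adding the $L^{p}$ Lipschitz bound for $Ku-Kv$ from (\ref{211}) yields the full $W^{1,p}$ estimate for $Ku-Kv$. Two integrations in $t$ then produce (\ref{SLipestimate}) with $J_{2}(R,T)=C\big(M(C_{0}R)+RN(C_{0}R)\big)\Vert \alpha\Vert_{1}T$, nondecreasing in $R$ and $T$, and the contraction scheme set out at the beginning of the section applies verbatim to finish the proof.
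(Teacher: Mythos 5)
Your proof is correct and follows the paper's overall contraction scheme: the same choice $X=W^{1,p}({\Bbb R})$, the same reuse of the $L^{p}$ estimates (\ref{kup}), (\ref{210}), (\ref{211}), the same differentiation formula for $(Ku)_x$ from Theorem \ref{theo2.3}, and the same use of the Sobolev embedding $\Vert u\Vert_{\infty}\le C\Vert u\Vert_{W^{1,p}}$ to control the arguments of $M$ and $N$. At the one genuinely delicate point, however, you take a different --- and in fact more careful --- route than the paper. The paper's proof simply says ``again from (\ref{Lip_x})'' and takes $L^{p}$ norms of that pointwise inequality with $M(R),N(R)$ replaced by $M(CR),N(CR)$. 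But (\ref{Lip_x}) was derived in Theorem \ref{theo2.3} under the hypotheses $\left\vert \mu_{i}\right\vert \le 2R$, i.e.\ pointwise bounds on $u_x(y)-u_x(x)$ and $v_x(y)-v_x(x)$; in $W^{1,p}$ with $p<\infty$, membership in $Y(T)$ gives only $\Vert v_x\Vert_{p}\le R$ and no pointwise control of $v_x$, so the coefficient $2RN(\cdot)$ in (\ref{Lip_x}) is not literally available, and the paper's display is valid as written only for $p=\infty$. Your decomposition resolves exactly this: in the term $(w^{\prime}(a)-w^{\prime}(b))(v_x(y)-v_x(x))$ you keep the unbounded factor $v_x(y)-v_x(x)$ inside the convolution, where Lemma \ref{lem2.1} needs only $\Vert v_x\Vert_{p}\le R$, and instead extract the factor $\left\vert w^{\prime}(a)-w^{\prime}(b)\right\vert \le 2C_{0}N(C_{0}R)\Vert u-v\Vert_{W^{1,p}}$, bounded via the embedding. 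The only price is that this contribution is controlled by $\Vert u-v\Vert_{W^{1,p}}$ rather than by $\Vert u-v\Vert_{p}$ as in the paper's display, which is immaterial since the contraction estimate is taken in $\Vert \cdot \Vert_{X(T)}$ anyway. So your argument reaches the paper's conclusion with essentially the same constants and, beyond that, supplies the justification that the paper's appeal to (\ref{Lip_x}) leaves implicit.
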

\begin{proof}
    Let $X=W^{1,p}({\Bbb R})\subset L^{\infty }({\Bbb R})$. Since
    $~\left\Vert u \right\Vert_{W^{1,p}}=\left\Vert u \right\Vert_{p}+\left\Vert u^{\prime} \right\Vert_{p}$,
    we need derivative estimates only  in addition to the $L^{p}$ estimates (\ref{210}) and (\ref{211}).
    For  $u, v\in Y(T)$, from (\ref{Kx})-(\ref{sux}) and  Minkowski's inequality we have
    \begin{displaymath}
        \left\Vert (Su)_{x}(t)\right\Vert_{p}
        \leq  \Vert \varphi^{\prime}\Vert_{p }+t\Vert \psi^{\prime}\Vert_{p}
            +2 \Vert \alpha \Vert_{1}\int_{0}^{t}(t-\tau)M(\Vert u(\tau)\Vert_{\infty })\Vert u_{x}(\tau)\Vert_{p }d\tau.
    \end{displaymath}
    We note that the term $\Vert u\Vert_{\infty}$  can be eliminated by using
    $\Vert u\Vert_{\infty} \le C \Vert u\Vert_{W^{1,p}}$ due to the Sobolev Embedding Theorem.
    So $M(\Vert u(\tau)\Vert_{\infty })\leq M(CR)$ and adding up the above estimate with  (\ref{210}) proves (\ref{Sestimate});
    \begin{eqnarray*}
        \left\Vert Su\right\Vert_{X(T)}
        & = & \max_{t\in [ 0,T]}(\Vert (Su)(t)\Vert_{p}+\Vert (Su)_{x}(t)\Vert_{p}) \\
        &\leq & \Vert \varphi \Vert_{W^{1,p}}
            +T \Vert \psi \Vert_{W^{1,p}}+M(CR)R\Vert \alpha \Vert_{1}T^{2}.
    \end{eqnarray*}
    Again from (\ref{Lip_x}) we get
    \begin{eqnarray*}
        \Vert (Su-Sv)_{x}(t)\Vert_{p}
        &\leq & 2M(CR)\Vert \alpha \Vert_{1}\int_{0}^{t}(t-\tau)\Vert u_{x}(\tau)-v_{x}(\tau)\Vert_{p}d\tau \\
        & & +4RN(CR)\Vert \alpha \Vert_{1}\int_{0}^{t}(t-\tau)\Vert u(\tau)-v(\tau)\Vert_{p}d\tau.
    \end{eqnarray*}
    Together with (\ref{211}), we conclude the proof:
    \begin{displaymath}
        \Vert Su-Sv\Vert_{X(T)}
        \leq  2\left( M(CR)+RN(CR)\right) \Vert \alpha \Vert_{1}T^{2}\Vert u-v\Vert_{X(T)}.
    \end{displaymath}
\end{proof}
\begin{remark}\label{rem2.1}
    We remark that the investigation  can also continue for smoother data in  along the same lines. That is, for initial
    data in $C_{b}^{k}({\Bbb R})$ or $W^{k,p}({\Bbb R})$ with integer $k$ we can prove higher-order versions of
    Theorems \ref{theo2.3}-\ref{theo2.4}. Also,
    the proofs clearly indicate that in Theorems \ref{theo2.1} and \ref{theo2.2}  we can replace the assumption
    $w\in C^{1}({\Bbb R})$ with its weaker form: $w$  is locally Lipschitz. Similarly, in Theorems
    \ref{theo2.3} and \ref{theo2.4} the assumption  $w\in C^{2}({\Bbb R})$ can be weakened to the condition:
    $w^{\prime}$ is locally Lipschitz.
\end{remark}
\begin{remark}
    The above theorems of  local well-posedness  can be easily adapted to the general peridynamic equation
    (\ref{peridynamic1}).
    Theorem \ref{theo2.5} below extends Theorem \ref{theo2.1} to the general peridynamic equation (\ref{peridynamic1}). Clearly, similar extensions are
    also possible in the cases of Theorems \ref{theo2.2}-\ref{theo2.4}.
\end{remark}
\begin{theorem}\label{theo2.5}
    Assume that $ f\left(\zeta, 0\right) =0$ and $ f\left( \zeta, \eta \right) $ is continuously differentiable in
    $\eta $ for almost all $\zeta$. Moreover, suppose that for each $R>0$, there are integrable functions
    $\Lambda^{R}_{1}$, $\Lambda^{R}_{2}$  satisfying
    \begin{displaymath}
         \left\vert f\left( \zeta, \eta \right) \right\vert \leq \Lambda^{R}_{1}\left( \zeta \right), ~~~~~~
         \left\vert f_{\eta}\left( \zeta, \eta\right) \right\vert \leq \Lambda^{R}_{2}\left( \zeta \right)
    \end{displaymath}%
    for almost all $\zeta$ and for all $\left\vert \eta \right\vert \leq 2R$. Then there is some $~T>0~$ such that the Cauchy problem (\ref{peridynamic1})-(\ref{cau2})
    is well posed with solution in $~C^{2}([0,T], C_{b}({\Bbb R}))~$ for initial data
    $~\varphi, \psi \in C_{b}({\Bbb R})$.
\end{theorem}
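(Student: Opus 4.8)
The plan is to reuse, essentially unchanged, the contraction scheme laid out at the start of Section \ref{sec:2}, taking $X=C_b(\mathbb R)$ and replacing the separable operator (\ref{K}) by
\[
(Ku)(x,t)=\int_{\mathbb R} f\bigl(y-x,\,u(y,t)-u(x,t)\bigr)\,dy ,
\]
so that $\zeta=y-x$ and $\eta=u(y,t)-u(x,t)$ in the notation of the hypotheses and $f_\eta$ is the derivative in the displacement slot. Once I verify the two structural estimates (\ref{Sestimate}) and (\ref{SLipestimate}) with functions $J_1,J_2$ nondecreasing in $R$ and $T$, the abstract machinery already described delivers the conclusion verbatim. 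The role played by $w(0)=0$ and $M(R)$ in Theorem \ref{theo2.1} is now played by $f(\zeta,0)=0$ together with the integrable majorants $\Lambda_1^R$ and $\Lambda_2^R$. Throughout I use that $u\in Y(T)$ forces $\|u(t)\|_\infty\le R$ and hence $|u(y,t)-u(x,t)|\le 2R$, so that the hypotheses on $f$ apply with this value of $R$.

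For the growth estimate (\ref{Sestimate}), the bound $|f(\zeta,\eta)|\le\Lambda_1^R(\zeta)$ for $|\eta|\le 2R$ gives at once
\[
|(Ku)(x,t)|\le\int_{\mathbb R}\Lambda_1^R(y-x)\,dy=\|\Lambda_1^R\|_1 ,
\]
so $\|(Ku)(t)\|_\infty\le\|\Lambda_1^R\|_1$; inserting this into (\ref{inteq}) as in (\ref{265})--(\ref{infestimate}) yields $J_1(R,T)=\|\psi\|_\infty+\tfrac12\|\Lambda_1^R\|_1 T$. For the Lipschitz estimate (\ref{SLipestimate}), with $u,v\in Y(T)$ both relative displacements lie in $[-2R,2R]$, so the Mean Value Theorem in $\eta$ and $|f_\eta(\zeta,\eta)|\le\Lambda_2^R(\zeta)$ give
\[
\bigl|f(y-x,u(y)-u(x))-f(y-x,v(y)-v(x))\bigr|\le\Lambda_2^R(y-x)\bigl(|u(y)-v(y)|+|u(x)-v(x)|\bigr).
\]
Integrating and applying Lemma \ref{lem2.1} to the resulting convolution exactly as in (\ref{Klipinf}) produces $\|(Ku)(t)-(Kv)(t)\|_\infty\le 2\|\Lambda_2^R\|_1\|u(t)-v(t)\|_\infty$, hence $J_2(R,T)=\|\Lambda_2^R\|_1 T$, the precise analogue of (\ref{inflipestimate}).

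The step I expect to demand the most care is the one the proof of Theorem \ref{theo2.1} could treat as evident, namely that $Ku\in C([0,T],C_b(\mathbb R))$; here $f$ is merely measurable (indeed only defined for almost every $\zeta$) in its position slot, so continuity is no longer automatic. First I would observe that $z\mapsto f\bigl(z,u(x+z,t)-u(x,t)\bigr)$ is measurable because $f$ is a Carath\'eodory function (measurable in $\zeta$, continuous in $\eta$) composed with a continuous map, so $(Ku)(x,t)$ is well defined and bounded by $\|\Lambda_1^R\|_1$. Continuity in $x$ for fixed $t$ would then follow from the Dominated Convergence Theorem: as $x\to x_0$ the integrand converges for almost every $z$, by continuity of $f(z,\cdot)$ and of $u(\cdot,t)$, and is dominated by $\Lambda_1^R\in L^1$. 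Continuity in $t$, uniformly in $x$, is already contained in the Lipschitz bound above, since the same Mean Value Theorem estimate gives $\|(Ku)(t)-(Ku)(t_0)\|_\infty\le 2\|\Lambda_2^R\|_1\|u(t)-u(t_0)\|_\infty\to 0$; this is what justifies differentiating (\ref{inteq}) to recover $u_{tt}=Ku$ and therefore $u\in C^2([0,T],C_b(\mathbb R))$. With (\ref{Sestimate}) and (\ref{SLipestimate}) established, taking $R\ge 2\|\varphi\|_\infty$ and then $T$ small enough closes the argument as in the general scheme.
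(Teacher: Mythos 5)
Your proposal is correct and follows essentially the same route as the paper's own proof: the contraction scheme of Section \ref{sec:2} with $X=C_b({\Bbb R})$, the direct bound $|f(\zeta,\eta)|\le\Lambda_1^R(\zeta)$ giving $\Vert Ku\Vert_\infty\le\Vert\Lambda_1^R\Vert_1$ together with continuity of $Ku$ via the Dominated Convergence Theorem, and the Mean Value Theorem with $|f_\eta(\zeta,\eta)|\le\Lambda_2^R(\zeta)$ replacing the factor $M(R)\Vert\alpha\Vert_1$ in the Lipschitz estimates (\ref{Klipinf})--(\ref{inflipestimate}). In fact you supply slightly more detail than the paper does --- the Carath\'eodory measurability of $z\mapsto f(z,u(x+z,t)-u(x,t))$ and the continuity of $Ku$ in $t$ needed to differentiate (\ref{inteq}) --- which the paper's terse proof leaves implicit.
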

\begin{proof}
    We proceed as in the proof of Theorem \ref{theo2.1}.  By the Dominated Convergence Theorem, the condition
    $\left\vert f\left( \zeta, \eta \right)\right\vert \leq \Lambda^{R}_{1}\left( \zeta \right) $ implies that $Ku$ is
    continuous in $x$ so that $S:X(T)\rightarrow X(T)$.  Using the second inequality
    $\left\vert f_{\eta }\left( \zeta, \eta \right) \right\vert \leq \Lambda^{R}_{2}\left( \zeta \right) $ the
    estimates for  $\left\Vert (Su)(t)\right\Vert_{\infty }$ and
    $\left\Vert (Su)(t)-(Sv)(t)\right\Vert_{\infty }$ follow as in (\ref{infestimate}) and (\ref{inflipestimate}), just
    replacing the term $M(R)\Vert \alpha \Vert_{1}$ by $\Vert \Lambda^{R}_{1}\Vert_{1}$ and
    $\Vert \Lambda^{R}_{2}\Vert_{1}$ respectively, completing the proof.
\end{proof}
\begin{remark}
    To finish this section let us briefly mention the issue of multidimensional case in the general three-dimensional
    peridynamic theory.     Although our analysis in this section has been presented for the one-dimensional case of the
    peridynamic formulation, the techniques used can be extended to the case of a system of three peridynamic equations
    in three space variables without any additional complication. Namely, if we replace
    the scalars  $x$, $y$, $u$,  $w$ and $\alpha$ in (\ref{cau1})-(\ref{cau2}) by the vectors $\bf x$, $\bf y$, $\bf u$,
    $\bf w({\bf u})$ and the matrix $\bf \alpha({\bf x})$, respectively, the local existence theorems given above
    will still  be valid.
\end{remark}

\setcounter{equation}{0}
\section{The Cubic Nonlinear Case in $H^{s}({\Bbb R})$}
\label{sec:3}
We now want to consider the Cauchy problem (\ref{cau1})-(\ref{cau2}) in the $L^{2}$ Sobolev space setting. We will
denote the $L^{2}$ Sobolev space of order $s$ on $\Bbb R$ by $H^{s}(\Bbb R)$ with norm
\begin{displaymath}
    \left\Vert u \right\Vert_{H^{s}}^{2}=\int_{\Bbb R} (1+ \xi^{2})^{s}|\widehat{u}(\xi)|^{2}  \mbox{d}\xi
\end{displaymath}
where $\widehat{u}$ denotes  the Fourier transform of $u$. For integer $k\ge 0$, $H^{k}({\Bbb R})=W^{k,2}(\Bbb R)$.

As mentioned in Remark \ref{rem2.1}, the proof in the case of $ H^{1}({\Bbb R})$ can be extended to
$H^{k}({\Bbb R})$. On the other hand, for non-integer $s$, $H^{s}$ estimates of the nonlinear term $w(u(y)-u(x))$ involve technical difficulties. Nevertheless, the case of polynomial nonlinearities can be handled in a
straightforward manner. We illustrate this in the typical case $w(\eta)=\eta^3$. Then, the integral on the right-hand
side of (\ref{cau1}) can be computed explicitly in terms of convolutions and the Cauchy problem (\ref{cau1})-(\ref{cau2})
becomes
\begin{eqnarray}
    \hspace*{-20pt} && u_{tt}= \alpha \ast {u^3}-3u (\alpha \ast {u^2})+3{u^2} (\alpha \ast {u})-Au^{3} \label{peridynamic3}\\
    \hspace*{-20pt} && u(x,0)=\varphi(x),~~~~~~ u_{t}(x,0)=\psi(x), \label{peridynamic4}
\end{eqnarray}
where $A=\int_{\Bbb R} \alpha (y)dy$.

For the estimates below we need the following lemmas.
\begin{lemma}\label{lem3.1}
    Let $\alpha \in L^{1}({\Bbb R})$ and $u \in H^{s}({\Bbb R})$ for $s\ge 0$. Then $\alpha \ast u \in H^{s}({\Bbb R})$ and
    \begin{displaymath}
        \left\Vert \alpha \ast u \right\Vert_{H^{s}} \leq \left\Vert \alpha \right\Vert_{1}\left\Vert u \right\Vert_{H^{s}}.
    \end{displaymath}
\end{lemma}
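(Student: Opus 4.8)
The plan is to move the whole estimate onto the Fourier side, where convolution becomes multiplication and the $H^{s}$ norm is simply a weighted $L^{2}$ norm of $\widehat{u}$. Before doing so I would first record that the convolution is well defined: since $s\ge 0$ gives $u\in H^{s}({\Bbb R})\subset L^{2}({\Bbb R})$ and $\alpha\in L^{1}({\Bbb R})$, Lemma \ref{lem2.1} with $p=2$ already shows $\alpha\ast u\in L^{2}({\Bbb R})$, so its Fourier transform is meaningful and the convolution theorem $\widehat{\alpha\ast u}=\widehat{\alpha}\,\widehat{u}$ applies (because $\alpha$ is even, the convolution of Lemma \ref{lem2.1} coincides with the usual one).

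The only other ingredient I need is the elementary pointwise bound on the transform of an integrable function, namely that for every $\xi\in{\Bbb R}$
\[
    \left\vert \widehat{\alpha}(\xi)\right\vert
        =\left\vert \int_{\Bbb R}\alpha(x)e^{-i\xi x}\,dx\right\vert
        \le \int_{\Bbb R}\left\vert \alpha(x)\right\vert \,dx
        =\left\Vert \alpha\right\Vert_{1},
\]
which holds for any standard normalization of the Fourier transform and makes no use of the sign or parity of $\alpha$.

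With these two facts the conclusion is immediate from the definition of the $H^{s}$ norm: substituting $\widehat{\alpha\ast u}=\widehat{\alpha}\,\widehat{u}$ and pulling out the uniform bound $\left\vert\widehat{\alpha}\right\vert\le\left\Vert\alpha\right\Vert_{1}$ gives
\[
    \left\Vert \alpha\ast u\right\Vert_{H^{s}}^{2}
        =\int_{\Bbb R}(1+\xi^{2})^{s}\left\vert\widehat{\alpha}(\xi)\right\vert^{2}\left\vert\widehat{u}(\xi)\right\vert^{2}\,d\xi
        \le \left\Vert\alpha\right\Vert_{1}^{2}\int_{\Bbb R}(1+\xi^{2})^{s}\left\vert\widehat{u}(\xi)\right\vert^{2}\,d\xi
        =\left\Vert\alpha\right\Vert_{1}^{2}\left\Vert u\right\Vert_{H^{s}}^{2},
\]
and taking square roots yields the asserted inequality; the finiteness of the right-hand side simultaneously confirms $\alpha\ast u\in H^{s}({\Bbb R})$. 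I do not expect any genuine obstacle here, as this is just the $H^{s}$ counterpart of Young's inequality (Lemma \ref{lem2.1}); the only points deserving a word of care are the well-definedness of the convolution and the convention-independence of the bound $\left\vert\widehat{\alpha}\right\vert\le\left\Vert\alpha\right\Vert_{1}$, both noted above.
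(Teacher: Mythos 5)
Your proof is correct and matches the argument the paper intends: Lemma \ref{lem3.1} is stated there without proof, and since the $H^{s}$ norm is defined in Section 3 directly via the Fourier transform, the standard justification is precisely your multiplier bound $\vert\widehat{\alpha}(\xi)\vert\le\Vert\alpha\Vert_{1}$ applied to $\widehat{\alpha\ast u}=\widehat{\alpha}\,\widehat{u}$, with well-definedness of the convolution supplied by Lemma \ref{lem2.1} exactly as you note. Incidentally, your evenness caveat is harmless but not needed: with the paper's convention $(f\ast g)(x)=\int_{\Bbb R}f(y-x)g(y)\,dy$ one computes $\widehat{\alpha\ast u}(\xi)=\widehat{\alpha}(-\xi)\,\widehat{u}(\xi)$, and $\vert\widehat{\alpha}(-\xi)\vert\le\Vert\alpha\Vert_{1}$ yields the same estimate without any parity assumption on $\alpha$.
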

\begin{lemma}\label{lem3.2} \cite{taylor}
    Let $s\ge 0$ and $u, v \in H^{s}({\Bbb R}) \cap L^{\infty}({\Bbb R})$. Then $uv \in H^{s}({\Bbb R})$ and  for some
    constant $C$ (independent of $u$ and $v$)
    \begin{displaymath}
        \left\Vert u v \right\Vert_{H^{s}} \leq C( \left\Vert u \right\Vert_{\infty}\left\Vert v \right\Vert_{H^{s}}
            +\left\Vert v \right\Vert_{\infty}\left\Vert u \right\Vert_{H^{s}}).
    \end{displaymath}
\end{lemma}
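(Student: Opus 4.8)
The plan is to establish this estimate as a Moser/Kato--Ponce-type product inequality via a Littlewood--Paley decomposition, since the $L^\infty$ norms appearing on the right-hand side, together with the demand that $s\ge 0$ be \emph{arbitrary} (in particular non-integer), rule out an elementary Leibniz-rule-plus-interpolation argument. First I would fix a dyadic partition of unity on the Fourier side, writing $u=\sum_{j}\Delta_j u$ with $\Delta_j u$ frequency-localized in an annulus $|\xi|\sim 2^j$, and $S_j=\sum_{k<j}\Delta_k$ the associated low-frequency cutoffs. The two facts I would lean on are the norm equivalence $\|u\|_{H^s}^2\sim\sum_j 2^{2js}\|\Delta_j u\|_2^2$ and the uniform bounds $\|S_j u\|_\infty\le C\|u\|_\infty$, $\|\Delta_j u\|_\infty\le C\|u\|_\infty$ (each block being a convolution against an $L^1$-normalized kernel). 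It is precisely these $L^\infty$ bounds---rather than the $\|\widehat{u}\|_1$ bound that a crude Fourier/Young estimate would force---that allow the $L^\infty$ norms, instead of the larger Fourier-$L^1$ norms, to survive in the final inequality.

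Next I would split the product by Bony's paraproduct formula
\[ uv = T_u v + T_v u + R(u,v),\qquad T_u v=\sum_j (S_{j-1}u)\,\Delta_j v,\quad R(u,v)=\sum_{|j-k|\le 1}\Delta_j u\,\Delta_k v. \]
For the low--high paraproduct $T_u v$, each summand $(S_{j-1}u)\Delta_j v$ is frequency-supported in an annulus $\sim 2^j$, so only boundedly many $j$ contribute to a given dyadic block; invoking $\|S_{j-1}u\|_\infty\le C\|u\|_\infty$ gives $\|\Delta_m(T_u v)\|_2\le C\|u\|_\infty\|\Delta_m v\|_2$ up to bounded index shifts, and multiplying by $2^{ms}$ and taking $\ell^2$ in $m$ yields $\|T_u v\|_{H^s}\le C\|u\|_\infty\|v\|_{H^s}$. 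The symmetric term is controlled the same way by $\|T_v u\|_{H^s}\le C\|v\|_\infty\|u\|_{H^s}$.

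The delicate term, and the step I expect to be the main obstacle, is the resonant remainder $R(u,v)$, for this is where $s\ge 0$ is essential. Here $\Delta_j u\,\Delta_k v$ with $|j-k|\le 1$ is frequency-supported in a \emph{ball} $|\xi|\lesssim 2^j$, hence feeds into every block $\Delta_m$ with $m\le j+c$. Estimating $\|\Delta_j u\,\Delta_k v\|_2\le\|\Delta_j u\|_\infty\|\Delta_k v\|_2\le C\|u\|_\infty\|\Delta_k v\|_2$ and setting $b_j=2^{js}\|\Delta_j v\|_2\in\ell^2$ (with $\|b\|_{\ell^2}=\|v\|_{H^s}$), I would bound
\[ 2^{ms}\|\Delta_m R(u,v)\|_2\le C\|u\|_\infty\sum_{j\ge m-c}2^{(m-j)s}\,b_j, \]
a discrete convolution whose kernel $k(\ell)=2^{\ell s}$, with $\ell=m-j\le c$, decays geometrically as $\ell\to-\infty$ precisely when $s>0$, so that $k\in\ell^1$; Young's inequality for $\ell^1\ast\ell^2\to\ell^2$ then delivers $\|R(u,v)\|_{H^s}\le C\|u\|_\infty\|v\|_{H^s}$. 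The borderline value $s=0$ is dispatched separately and trivially, since $H^0=L^2$ and $\|uv\|_2\le\|u\|_\infty\|v\|_2$. Summing the three contributions gives the asserted inequality. As indicated, the crux is this high$\times$high$\to$low frequency cascade: controlling it requires the sign condition $s\ge 0$ together with the frequency-support localization of the dyadic blocks to sum over the super-diagonal, and this is exactly the maneuver that a Leibniz-plus-interpolation approach cannot reach for non-integer $s$.
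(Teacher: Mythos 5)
The paper gives no proof of this lemma at all---it is quoted from the cited reference (Taylor, \emph{PDE III})---and your Littlewood--Paley/Bony paraproduct argument is precisely the standard proof found there: the two paraproducts handled by the uniform $L^\infty$ bounds on $S_{j-1}u$ together with the annular frequency support, and the resonant remainder summed over the super-diagonal via the geometric kernel $2^{(m-j)s}$, which is where $s>0$ enters, with $s=0$ settled directly by $\Vert uv\Vert_{2}\le \Vert u\Vert_{\infty}\Vert v\Vert_{2}$. Your proof is correct and complete (modulo the routine choice of inhomogeneous dyadic blocks, for which your stated norm equivalence holds), so it matches the approach the paper implicitly relies on.
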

For the space $ H^{s}({\Bbb R})\cap L^{\infty}({\Bbb R})$ we use the norm
$\left\Vert u \right\Vert_{s,\infty}=\left\Vert u \right\Vert_{H^{s}}+\left\Vert u \right\Vert_{\infty}$.
In general, Lemma \ref{lem3.2}  implies that $ H^{s}({\Bbb R})\cap L^{\infty}({\Bbb R})$ is
an algebra
\begin{equation}
    \left\Vert u v \right\Vert_{s,\infty} \leq C \left\Vert u \right\Vert_{s,\infty}\left\Vert v \right\Vert_{s,\infty},
          \label{uv}
\end{equation}
and, by Lemmas \ref{lem2.1} and \ref{lem3.1}, for $\alpha \in L^{1}({\Bbb R})$
\begin{equation}
    \left\Vert \alpha \ast u \right\Vert_{s,\infty} \leq \left\Vert \alpha \right\Vert_{1}\left\Vert u \right\Vert_{s,\infty}.
            \label{betacon}
\end{equation}
We are now ready to prove the following theorem.
\begin{theorem}\label{theo3.1}
    Let $~s>0$. Assume that $~\varphi, \psi \in H^{s}({\Bbb R})\cap L^{\infty}({\Bbb R})$. Then there is some $~T>0~$ such that the
    Cauchy problem (\ref{peridynamic3})-(\ref{peridynamic4}) is well posed with solution in
    $~C^{2}([0,T], H^{s}({\Bbb R})\cap L^{\infty}({\Bbb R}))~$.
\end{theorem}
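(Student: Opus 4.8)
The plan is to run the contraction-mapping scheme laid out at the beginning of Section~\ref{sec:2}, this time with $X = H^{s}(\mathbb{R})\cap L^{\infty}(\mathbb{R})$ equipped with the norm $\|\cdot\|_{s,\infty}$ and with $Ku$ now given by the explicit cubic expression on the right-hand side of (\ref{peridynamic3}),
\[
    Ku = \alpha\ast u^{3} - 3u(\alpha\ast u^{2}) + 3u^{2}(\alpha\ast u) - Au^{3}.
\]
Once the two estimates (\ref{Sestimate}) and (\ref{SLipestimate}) are established, the differentiability in $t$, the identity $u_{tt}=Ku$, uniqueness and continuous dependence all follow verbatim from the general argument. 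Since $\int_{0}^{t}(t-\tau)\,d\tau = t^{2}/2 \le T^{2}/2$, everything reduces to bounding $\|Ku\|_{s,\infty}$ and $\|Ku-Kv\|_{s,\infty}$ by the $\|\cdot\|_{s,\infty}$ norms of $u$ and $v$.

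First I would bound $Ku$ summand by summand, using that $H^{s}(\mathbb{R})\cap L^{\infty}(\mathbb{R})$ is a Banach algebra, (\ref{uv}), and that convolution with $\alpha$ is bounded on it, (\ref{betacon}). For example $\|\alpha\ast u^{3}\|_{s,\infty}\le \|\alpha\|_{1}\|u^{3}\|_{s,\infty}\le C^{2}\|\alpha\|_{1}\|u\|_{s,\infty}^{3}$, and $\|u(\alpha\ast u^{2})\|_{s,\infty}\le C\|u\|_{s,\infty}\|\alpha\|_{1}\|u^{2}\|_{s,\infty}\le C^{2}\|\alpha\|_{1}\|u\|_{s,\infty}^{3}$; the remaining two terms are identical, using $|A|=|\int_{\mathbb{R}}\alpha|\le\|\alpha\|_{1}$. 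This produces a bound of the form $\|Ku\|_{s,\infty}\le C\|\alpha\|_{1}\|u\|_{s,\infty}^{3}$, whence $\|Ku\|_{s,\infty}\le C\|\alpha\|_{1}R^{3}$ for $u\in Y(T)$. Integrating against $(t-\tau)$ then gives (\ref{Sestimate}) with $J_{1}(R,T)=\|\psi\|_{s,\infty}+\tfrac12 C\|\alpha\|_{1}R^{3}T$.

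For the Lipschitz estimate the only real step is to factor each cubic difference so as to expose a factor of $u-v$. Writing $u^{3}-v^{3}=(u-v)(u^{2}+uv+v^{2})$ and, for the mixed terms, decompositions such as $u(\alpha\ast u^{2})-v(\alpha\ast v^{2})=(u-v)(\alpha\ast v^{2})+u\bigl(\alpha\ast((u-v)(u+v))\bigr)$ together with $u^{2}(\alpha\ast u)-v^{2}(\alpha\ast v)=u^{2}(\alpha\ast(u-v))+(u-v)(u+v)(\alpha\ast v)$, every summand of $Ku-Kv$ becomes $u-v$ multiplied by a quadratic expression in $u,v$, possibly composed with one convolution. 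All these quadratic factors are controlled by $CR^{2}$ for $u,v\in Y(T)$ through (\ref{uv}) and (\ref{betacon}), so
\[
    \|Ku-Kv\|_{s,\infty}\le C\|\alpha\|_{1}R^{2}\|u-v\|_{s,\infty}.
\]
Integrating yields (\ref{SLipestimate}) with $J_{2}(R,T)=\tfrac12 C\|\alpha\|_{1}R^{2}T$. As $J_{1},J_{2}$ are nondecreasing in $R$ and $T$, taking $R\ge 2\|\varphi\|_{s,\infty}$ and then $T$ small enough to force $TJ_{1}\le R/2$ and $TJ_{2}\le 1/2$ finishes the proof through the scheme of Section~\ref{sec:2}.

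I expect no serious obstacle here: all the genuine harmonic-analysis content has already been packaged into the algebra property (\ref{uv}), which rests on the quoted Lemma~\ref{lem3.2}, and into the convolution bound (\ref{betacon}). The decisive move---replacing the implicit nonlinearity $\int\alpha(y-x)w(u(y)-u(x))\,dy$ by the explicit polynomial in convolutions and products of (\ref{peridynamic3})---was made before the statement, precisely to sidestep the difficulty of estimating $w(u(y)-u(x))$ directly in a fractional $H^{s}$ norm. What remains is therefore only careful bookkeeping of the polynomial factorizations; the same computation would in fact also cover any fixed polynomial nonlinearity and any $s\ge 0$.
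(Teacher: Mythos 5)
Your proposal is correct and follows essentially the same route as the paper: the contraction scheme of Section~2 with $X=H^{s}({\Bbb R})\cap L^{\infty}({\Bbb R})$, termwise bounds on $u^{i}(\alpha\ast u^{j})$ via the algebra property (\ref{uv}) and the convolution bound (\ref{betacon}), and the same splitting $u^{i}(\alpha\ast u^{j})-v^{i}(\alpha\ast v^{j})=u^{i}(\alpha\ast(u^{j}-v^{j}))+(u^{i}-v^{i})(\alpha\ast v^{j})$ for the Lipschitz estimate. The only (harmless) difference is that you settle for the coarse bound $\Vert Ku\Vert_{s,\infty}\le C\Vert\alpha\Vert_{1}\Vert u\Vert_{s,\infty}^{3}$, whereas the paper also records the sharper form $\Vert Ku\Vert_{s,\infty}\le C\Vert\alpha\Vert_{1}\Vert u\Vert_{\infty}^{2}\Vert u\Vert_{s,\infty}$, which is not needed here but is used later for the $L^{\infty}$ blow-up characterization in Theorem~\ref{theo4.1}.
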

\begin{proof}
    We follow the scheme summarized at the beginning of  Section 2 for
    $X=H^{s}({\Bbb R})\cap L^{\infty }({\Bbb R})$. Explicitly,
    \begin{displaymath}
        Ku=\alpha \ast u^{3}-3u(\alpha \ast u^{2})+3u^{2}(\alpha \ast u)-Au^{3}.
    \end{displaymath}
    We start by estimating the terms of the form $u^{i}(\alpha \ast u^{j})$ for $i+j=3$. Clearly from (\ref{uv}) and
    (\ref{betacon}),
    $\left\Vert u^{i}\left( \alpha \ast u^{j}\right)\right\Vert_{s,\infty }
    \leq C \left\Vert \alpha \right\Vert_{1}\left\Vert u\right\Vert_{s,\infty }^{3}$.
    Nevertheless, for later use we derive a more precise estimate. By repeated use of Lemma \ref{lem3.2} we have
    $\left\Vert u^{j}\right\Vert_{H^{s}}\leq C_{j}\left\Vert u\right\Vert_{\infty}^{j-1}\left\Vert u\right\Vert_{H^{s}}$.
    Again, by Lemmas \ref{lem3.1} and \ref{lem3.2}
    \begin{eqnarray*}
        \left\Vert u^{i}\left( \alpha \ast u^{j}\right) \right\Vert_{H^{s}}
            &\leq & C (\left\Vert u^{i}\right\Vert_{H^{s}}\left\Vert \alpha \ast u^{j}\right\Vert_{\infty }
                    +\left\Vert u^{i}\right\Vert_{\infty }\left\Vert \alpha \ast u^{j}\right\Vert _{H^{s}}) \\
            &\leq & C \left( C_{i}+C_{j}\right) \left\Vert \alpha \right\Vert_{1}\left\Vert u\right\Vert_{\infty}^{2}\left\Vert u\right\Vert_{H^{s}},
    \end{eqnarray*}
    so that
    \begin{displaymath}
        \left\Vert Ku\right\Vert_{s,\infty}\leq C \left\Vert \alpha \right\Vert_{1} \left\Vert u\right\Vert_{\infty}^{2}\left\Vert u\right\Vert_{s,\infty }.
    \end{displaymath}
    Similarly
    \begin{eqnarray*}
        \left\Vert u^{i}(\alpha \ast u^{j})-v^{i}(\alpha \ast v^{j})\right\Vert_{s,\infty}
        &\leq &\left\Vert u^{i}(\alpha \ast (u^{j}-v^{j}))\right\Vert_{s,\infty}+\left\Vert (u^{i}-v^{i})(\alpha \ast v^{j})\right\Vert_{s,\infty} \\
        &\leq & C \left(\left\Vert u^{i}\right\Vert_{s,\infty }\left\Vert \alpha \ast (u^{j}-v^{j})\right\Vert _{s,\infty}
                +\left\Vert u^{i}-v^{i}\right\Vert_{s,\infty }\left\Vert \alpha \ast v^{j}\right\Vert_{s,\infty}\right) \\
        &\leq & C \left\Vert \alpha \right\Vert _{1}\left( \left\Vert u^{i}\right\Vert_{s,\infty}\left\Vert u^{j}-v^{j}\right\Vert_{s,\infty}
                +\left\Vert v^{j}\right\Vert_{s,\infty}\left\Vert u^{i}-v^{i}\right\Vert_{s,\infty}\right)  \\
        &\leq &\left\Vert \alpha \right\Vert_{1}P\left( \left\Vert u\right\Vert_{s,\infty},\left\Vert v
            \right\Vert_{s,\infty}\right) \left\Vert u-v \right\Vert_{s,\infty}
    \end{eqnarray*}
    where $P$ is some quadratic polynomial of two variables with nonnegative coefficients.
    The above results yield the following estimates for  $u,v\in Y(T)$
    \begin{displaymath}
        \Vert Su\Vert_{X(T)}
        \leq \Vert \varphi \Vert_{s,\infty}+T\Vert \psi \Vert_{s,\infty}+C\Vert \alpha \Vert_{1}R^{3}T^{2},
    \end{displaymath}
    and
    \begin{displaymath}
        \Vert Su-Sv\Vert_{X(T)}\leq P\left(R,R\right)\Vert \alpha \Vert_{1} T^{2}\Vert u-v\Vert_{X(T)}
    \end{displaymath}
    concluding the proofs of (\ref{Sestimate}) and (\ref{SLipestimate}).
\end{proof}

\setcounter{equation}{0}
\section{Global Existence and Blow Up in Finite Time}
\label{sec:4}

In this section,we will first show that the maximal time of existence for the solution of the Cauchy problem
(\ref{cau1})-(\ref{cau2}) depends only on the $L^{\infty }$ norm of the initial data. Then we
will prove the existence of a global solution for two classes of  nonlinearities and finally investigate blow-up
for general nonlinearities.

\subsection{Global Existence}
\label{sec:41}

By repeatedly applying local existence theorems (Theorems \ref{theo2.1}-\ref{theo2.4} and \ref{theo3.1})
the solution can be continued to  the maximal time interval $\left[ 0,T_{\max}\right)$ where either
$T_{\max }=\infty $, i.e. we have a global solution, or
\begin{displaymath}
    \lim \sup_{t\rightarrow T_{\max }^{-}} (\Vert u\left( t\right) \Vert_{X}
        +\Vert u_{t}\left( t\right) \Vert_{X})=\infty,
\end{displaymath}
where $\left\Vert {~}\right\Vert _{X}$ denotes either one of the norms in $C_{b}({\Bbb R})$,
    $L^{p}({\Bbb R})\cap L^{\infty }({\Bbb R})$,
     $C_{b}^{1}({\Bbb R})$, $W^{1,p}({\Bbb R})$ or $H^{s}({\Bbb R})\cap L^{\infty }({\Bbb R})$.
\begin{theorem}\label{theo4.1}
    Assume that the conditions in either one of Theorems \ref{theo2.1}-\ref{theo2.4} or \ref{theo3.1} hold.
    Then either there is a global solution or maximal time is  finite, where $T_{\max}$ is characterized by the
    $L^{\infty}$ blow-up condition
    \begin{displaymath}
    \lim \sup_{t\rightarrow T_{\max }^{-}}\Vert u\left( t\right) \Vert_{\infty }=\infty.
    \end{displaymath}
\end{theorem}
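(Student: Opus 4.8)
The plan is to argue by contradiction, showing that the only mechanism by which the solution can fail to be global is blow-up of the $L^{\infty}$-norm. By the continuation argument stated just before the theorem, if $T_{\max}<\infty$ then $\limsup_{t\to T_{\max}^-}(\Vert u(t)\Vert_X+\Vert u_t(t)\Vert_X)=\infty$, where $X$ is whichever of the five spaces is in force. It therefore suffices to prove the \emph{contrapositive}: if $T_{\max}<\infty$ and $\sup_{t\in[0,T_{\max})}\Vert u(t)\Vert_\infty=:M<\infty$, then $\Vert u(t)\Vert_X+\Vert u_t(t)\Vert_X$ in fact stays bounded on $[0,T_{\max})$, contradicting the above dichotomy. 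This forces the full-norm blow-up to be detected already by the $L^{\infty}$-norm, which is exactly the asserted characterization of $T_{\max}$.

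The observation driving everything is that, in each of the local estimates from Theorems \ref{theo2.1}--\ref{theo2.4} and \ref{theo3.1}, the nonlinear factor entering the bound on $Ku$ is governed \emph{solely} by $\Vert u\Vert_\infty$: it is $M(\Vert u\Vert_\infty)$ (see (\ref{MR}) and e.g. (\ref{kuinf}), (\ref{kup}), (\ref{Kx})) in the cases of Theorems \ref{theo2.1}--\ref{theo2.4}, and $\Vert u\Vert_\infty^2$ in the cubic case of Theorem \ref{theo3.1}. Hence, under the assumption $\Vert u(t)\Vert_\infty\le M$, monotonicity of $M(\cdot)$ gives $M(\Vert u(t)\Vert_\infty)\le M(M)$, and inspection of the derivations shows that every one of these estimates collapses to the linear form
\begin{displaymath}
    \Vert (Ku)(t)\Vert_X\le C_M\,\Vert u(t)\Vert_X,
\end{displaymath}
where $C_M$ depends only on $M$, $\Vert\alpha\Vert_1$ and (in the higher-regularity cases) the constants from Lemmas \ref{lem2.1}--\ref{lem3.2}, but no longer on any derivative norm of $u$. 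For instance, in the $C_b^1$ case the bounds (\ref{kuinf}) and (\ref{Kx}) add to $\Vert(Ku)(t)\Vert_{1,b}\le 2M(M)\Vert\alpha\Vert_1\Vert u(t)\Vert_{1,b}$, and the $L^p\cap L^\infty$, $W^{1,p}$ and $H^s\cap L^\infty$ cases are entirely analogous.

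With this linear bound in hand, I would feed it back into the integral equation (\ref{inteq}). Estimating in the $X$-norm and using $0\le t-\tau\le T_{\max}$ to replace the weight by the constant $T_{\max}$ gives
\begin{displaymath}
    \Vert u(t)\Vert_X\le \Vert\varphi\Vert_X+T_{\max}\Vert\psi\Vert_X
        +C_M T_{\max}\int_0^t \Vert u(\tau)\Vert_X\,d\tau ,
\end{displaymath}
a standard linear Gronwall inequality; it yields $\Vert u(t)\Vert_X\le(\Vert\varphi\Vert_X+T_{\max}\Vert\psi\Vert_X)\,e^{C_M T_{\max} t}$, finite on the bounded interval $[0,T_{\max})$. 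Differentiating (\ref{inteq}) once in $t$ gives $u_t(t)=\psi+\int_0^t(Ku)(\tau)\,d\tau$, whence $\Vert u_t(t)\Vert_X\le\Vert\psi\Vert_X+C_M\int_0^t\Vert u(\tau)\Vert_X\,d\tau$ is likewise bounded. Thus $\sup_{t\in[0,T_{\max})}(\Vert u(t)\Vert_X+\Vert u_t(t)\Vert_X)<\infty$, the desired contradiction.

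The Gronwall step and the bookkeeping of constants are routine. The only point needing genuine care -- and what I regard as the main obstacle -- is verifying uniformly across all five function-space settings that controlling $\Vert u\Vert_\infty$ alone suffices to linearize the estimate on $Ku$, i.e. that no hidden dependence on higher norms of $u$ survives in the coefficient. This is clear from the explicit estimates once one notes that every occurrence of the nonlinearity is bounded through $M(\Vert u\Vert_\infty)$ (or $\Vert u\Vert_\infty^2$), with the derivative norms of $u$ appearing only linearly; in particular the Sobolev embedding invoked in Theorem \ref{theo2.4} is no longer needed here, since $\Vert u\Vert_\infty$ is now assumed bounded outright.
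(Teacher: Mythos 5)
Your proposal is correct and follows essentially the same route as the paper's proof: both reduce the theorem to showing that a uniform $L^{\infty}$ bound forces the linear estimate $\Vert Ku\Vert_{X}\le C_{M}\Vert u\Vert_{X}$ (with constant depending only on $\Vert u\Vert_{\infty}$, via $M(\cdot)$ or $\Vert u\Vert_{\infty}^{2}$ in the cubic case) and then apply Gronwall's lemma to the integral equations for $u$ and $u_{t}$. Your contrapositive framing and explicit case-by-case verification of the $Ku$ estimate are only cosmetic differences from the paper, which states the same bound in the unified form $\Vert Ku\Vert_{X}\le {\cal M}(\Vert u\Vert_{\infty})\Vert u\Vert_{X}$.
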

\begin{proof}
    Clearly in each case the norm $\left\Vert {~}\right\Vert_{\infty }$ is smaller than
    $\left\Vert {~}\right\Vert _{X}$. Hence it suffices to prove that if
    $\lim \sup_{t\rightarrow T^{-}}\Vert u\left( t\right) \Vert_{\infty}=M <\infty$, then
    $\lim \sup_{t\rightarrow T^{-}}( \Vert u\left( t\right) \Vert_{X}+\Vert u_{t}\left( t\right) \Vert _{X}) <\infty $.
    So assume that the solution exists in some interval $[0,T)$ and satisfies
    $\Vert u\left( t\right) \Vert_{\infty }\leq R$ for all $0\leq t<T$.  The solution satisfies
    \begin{eqnarray*}
        u(x,t) &=&\varphi (x)+t\psi (x)+\int_{0}^{t}(t-\tau)(Ku)(x,\tau )d\tau, \\
        u_{t}(x,t) &=&\psi (x)+\int_{0}^{t}(Ku)(x,\tau )d\tau.
    \end{eqnarray*}
    In all cases the estimate for $Ku$  is of the form
    \begin{displaymath}
        \left\Vert Ku\right\Vert_{X}\leq {\cal M}(\left\Vert u\right\Vert_{\infty})\left\Vert u\right\Vert_{X}
    \end{displaymath}
    with a nondecreasing function $\cal M$ of $\left\Vert u\right\Vert_{\infty}$. Since
    $\Vert u\left( t\right) \Vert_{\infty }\leq R$  for all $t\in [0,T)$,
    \begin{displaymath}
        \left\Vert u\left( t\right) \right\Vert_{X}+\left\Vert u_{t}\left( t\right) \right\Vert_{X}
        \leq \left\Vert \varphi \right\Vert_{X}+\left( 1+T\right) \left\Vert \psi \right\Vert_{X}
            +\left( 1+T\right) {\cal M}(R) \int_{0}^{t}\left\Vert u(\tau) \right\Vert_{X}d\tau,
    \end{displaymath}
    so that Gronwall's Lemma implies
    \begin{displaymath}
        \left\Vert u\left( t\right) \right\Vert_{X}+\left\Vert u_{t}\left( t\right)\right\Vert_{X}
        \leq \left( \left\Vert \varphi \right\Vert_{X}+\left(1+T\right) \left\Vert \psi \right\Vert_{X}\right)
            e^{\left( 1+T\right){\cal M}(R) t}
    \end{displaymath}
    for all $t\in [0,T)$. So
    $\lim \sup_{t\rightarrow T^{-}}( \Vert u\left(t\right) \Vert_{X}+\Vert u_{t}\left( t\right) \Vert_{X})<\infty$.
\end{proof}
\begin{theorem}\label{theo4.2}
    Assume that the conditions in either one of Theorems \ref{theo2.1}-\ref{theo2.4} hold.
    If  the nonlinear term $w$ in (\ref{cau1}) satisfies
    $\left\vert w\left( \eta \right) \right\vert \leq a\left\vert \eta \right\vert +b$ for all $\eta \in {\Bbb R}$, then
    there is a global solution.
\end{theorem}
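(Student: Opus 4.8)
The plan is to reduce global existence to an a priori bound on the $L^{\infty}$ norm and then close that bound with Gronwall's Lemma, exploiting precisely the sublinear growth of $w$. By Theorem \ref{theo4.1}, the solution is global unless $\limsup_{t\to T_{\max}^{-}}\Vert u(t)\Vert_{\infty}=\infty$; hence it suffices to show that $\Vert u(t)\Vert_{\infty}$ stays finite on every bounded subinterval of $[0,T_{\max})$. Because Theorem \ref{theo4.1} reduces matters entirely to the sup norm, a single argument will cover all four initial-data classes of Theorems \ref{theo2.1}--\ref{theo2.4} at once, with no need to track the higher norms separately.

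First I would re-estimate $Ku$ using the hypothesis $\vert w(\eta)\vert\le a\vert\eta\vert+b$ in place of the Mean Value Theorem bound used in (\ref{kuinf}). Starting from (\ref{K}) and using $\vert u(y,t)-u(x,t)\vert\le\vert u(y,t)\vert+\vert u(x,t)\vert$ together with Lemma \ref{lem2.1}, one obtains
\[
\Vert (Ku)(t)\Vert_{\infty}\le a\big(\Vert\,|\alpha|\ast|u|\,\Vert_{\infty}+\Vert\alpha\Vert_{1}\Vert u(t)\Vert_{\infty}\big)+b\Vert\alpha\Vert_{1}\le 2a\Vert\alpha\Vert_{1}\Vert u(t)\Vert_{\infty}+b\Vert\alpha\Vert_{1}.
\]
The decisive point is that this bound is \emph{affine} in $\Vert u(t)\Vert_{\infty}$: the sublinearity of $w$ removes the factor $M(\Vert u\Vert_{\infty})$ that appears in (\ref{kuinf}) and that would otherwise grow with the solution.

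Next I would insert this estimate into the integral representation (\ref{inteq}). Fixing any finite $T<T_{\max}$ and using $t-\tau\le T$ for $\tau\in[0,t]$, the representation of $u$ gives, for $t\in[0,T]$,
\[
\Vert u(t)\Vert_{\infty}\le\Vert\varphi\Vert_{\infty}+T\Vert\psi\Vert_{\infty}+\tfrac{1}{2}b\Vert\alpha\Vert_{1}T^{2}+2a\Vert\alpha\Vert_{1}T\int_{0}^{t}\Vert u(\tau)\Vert_{\infty}\,d\tau.
\]
Since the first three terms form a constant in $t$ and the kernel $2a\Vert\alpha\Vert_{1}T$ is nonnegative, Gronwall's Lemma yields
\[
\Vert u(t)\Vert_{\infty}\le\big(\Vert\varphi\Vert_{\infty}+T\Vert\psi\Vert_{\infty}+\tfrac{1}{2}b\Vert\alpha\Vert_{1}T^{2}\big)\,e^{2a\Vert\alpha\Vert_{1}T^{2}},
\]
which is finite. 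Thus $\limsup_{t\to T^{-}}\Vert u(t)\Vert_{\infty}<\infty$ for every finite $T$, so the blow-up alternative of Theorem \ref{theo4.1} cannot be triggered at any finite time and $T_{\max}=\infty$.

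The only genuine obstacle is the very first step, namely securing that the nonlinear term grows at most linearly in the $L^{\infty}$ norm; this is where the sublinear hypothesis on $w$ is indispensable, since for a superlinear $w$ the coefficient $M(\Vert u\Vert_{\infty})$ would itself be driven by the solution and the Gronwall argument would fail. Once the affine estimate on $Ku$ is in hand, the remaining steps are routine.
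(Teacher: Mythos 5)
Your proposal is correct and follows essentially the same route as the paper's own proof: the affine bound $\Vert (Ku)(t)\Vert_{\infty}\le 2a\Vert\alpha\Vert_{1}\Vert u(t)\Vert_{\infty}+b\Vert\alpha\Vert_{1}$ inserted into the integral representation (\ref{inteq}), closed by Gronwall's Lemma and the $L^{\infty}$ blow-up criterion of Theorem \ref{theo4.1}. The only differences are immaterial constants (e.g.\ your $\tfrac{1}{2}b\Vert\alpha\Vert_{1}T^{2}$ versus the paper's $bT\Vert\alpha\Vert_{1}$), which do not affect the argument.
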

\begin{proof}
    Assume the solution exists on $[0,T)$. Then
    \begin{eqnarray*}
        \left\vert (Ku)(x,t)\right\vert
        & \leq & \int_{\Bbb R }\left\vert \alpha (y-x)\right\vert
            \left( a\left\vert u(y,\tau)-u(x,\tau )\right\vert +b\right) dy \\
           \hspace*{-60pt}
        & \leq &
           a\left( \left\vert \alpha \right\vert \ast \left\vert u\right\vert \right) (x,t)
            +a\left\Vert \alpha \right\Vert_{1}\left\vert u(x,t )\right\vert
            +b\left\Vert \alpha \right\Vert_{1},
    \end{eqnarray*}
    and by (\ref{inteq})
    \begin{eqnarray*}
        \left\Vert u(t)\right\Vert_{\infty }
        &\leq & \left\Vert \varphi \right\Vert_{\infty}+t\left\Vert \psi \right\Vert_{\infty}
            +\int_{0}^{t}(t-\tau )
            \left( a\left\Vert \left( \left\vert \alpha \right\vert \ast \left\vert u\right\vert \right) (\tau)\right\Vert_{\infty }
            +a\left\Vert \alpha \right\Vert_{1}\left\Vert u(\tau )\right\Vert_{\infty }
            +b\left\Vert \alpha \right\Vert_{1}\right)d\tau \\
        &\leq &\left\Vert \varphi \right\Vert_{\infty}+T\left\Vert \psi \right\Vert_{\infty}
            +bT\left\Vert \alpha \right\Vert_{1}+2aT\left\Vert \alpha \right\Vert_{1}\int_{0}^{t}\left\Vert u(\tau )\right\Vert _{\infty }d\tau ,
    \end{eqnarray*}
    and Gronwall's lemma shows that
    $\lim \sup_{t\rightarrow T^{-}}\Vert u\left(t\right) \Vert_{\infty } <\infty $.
\end{proof}
\begin{lemma}\label{lem4.1}(The Energy Identity)
    Assume that $\alpha \in L^{1}({\Bbb R})$ is  even  and $w\in C^{1}({\Bbb R})$ is odd with $w(0)=0$.
    If $u$ satisfies the Cauchy problem (\ref{cau1})-(\ref{cau2}) on $[0,T)$ with initial data
    $\varphi, \psi \in L^{1} ({\Bbb R}) \cap L^{\infty} ({\Bbb R})$, then the energy
    \begin{displaymath}
        E\left( t\right) =\frac{1}{2}\left\Vert u_{t}\left( t\right) \right\Vert_{2}^{2}
            +\frac{1}{2}\int_{{\Bbb R}^{2}}\alpha (y-x)
            ~W\left(u\left( y,t\right) -u\left( x,t\right) \right) dy dx,
    \end{displaymath}
    is constant for $t\in [0,T)$, where $W(\eta)=\int_{0}^{\eta}w(\rho)d\rho$.
\end{lemma}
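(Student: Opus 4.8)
The plan is to show that $E$ is differentiable on $[0,T)$ with $E'(t)\equiv 0$. First I would fix the regularity: since $\varphi,\psi\in L^{1}({\Bbb R})\cap L^{\infty}({\Bbb R})$, Theorem \ref{theo2.2} with $p=1$ provides the solution in $C^{2}([0,T],L^{1}({\Bbb R})\cap L^{\infty}({\Bbb R}))$, so that $u$, $u_{t}$ and $u_{tt}=Ku$ all lie in $C([0,T],L^{1}\cap L^{\infty})$ and, by the interpolation $\|f\|_{2}^{2}\le\|f\|_{1}\|f\|_{\infty}$, in $C([0,T],L^{2})$. Fixing a compact subinterval $[0,T']\subset[0,T)$ and putting $R=\sup_{t\in[0,T']}\|u(t)\|_{\infty}$, the Mean Value Theorem gives $|w(\eta)|\le M(R)|\eta|$ and $|W(\eta)|\le\tfrac12 M(R)\eta^{2}$ for $|\eta|\le 2R$. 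Combining this with Young's inequality (Lemma \ref{lem2.1}) and the Cauchy--Schwarz inequality one obtains
\begin{displaymath}
    \int_{{\Bbb R}^{2}}|\alpha(y-x)|\,|W(u(y,t)-u(x,t))|\,dy\,dx\le C\,M(R)\,\|\alpha\|_{1}\,\|u(t)\|_{2}^{2}<\infty,
\end{displaymath}
so that $E(t)$ is finite on $[0,T']$.

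Next I would differentiate the two parts of $E$ separately. For the kinetic part, $u\in C^{2}([0,T],L^{2})$ yields $\frac{d}{dt}\tfrac12\|u_{t}\|_{2}^{2}=\langle u_{tt},u_{t}\rangle=\langle Ku,u_{t}\rangle$. For the potential part I would differentiate under the integral sign; the $t$-derivative of the integrand is $\alpha(y-x)\,w(u(y)-u(x))\,(u_{t}(y)-u_{t}(x))$, which is dominated by the function $|\alpha(y-x)|\,M(R)\,(|u(y)|+|u(x)|)\,(|u_{t}(y)|+|u_{t}(x)|)$, integrable over ${\Bbb R}^{2}$ and locally uniformly in $t\in[0,T']$ by the same Young and Cauchy--Schwarz bounds. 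Hence Leibniz's rule applies and
\begin{displaymath}
    \frac{d}{dt}\,\frac12\int_{{\Bbb R}^{2}}\alpha(y-x)W(u(y)-u(x))\,dy\,dx=\frac12\int_{{\Bbb R}^{2}}\alpha(y-x)\,w(u(y)-u(x))\,(u_{t}(y)-u_{t}(x))\,dy\,dx.
\end{displaymath}

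The crux is the antisymmetry of the kernel $F(x,y):=\alpha(y-x)\,w(u(y)-u(x))$. Because $\alpha$ is even and $w$ is odd, $F(y,x)=\alpha(x-y)\,w(u(x)-u(y))=-\alpha(y-x)\,w(u(y)-u(x))=-F(x,y)$. Writing the potential derivative as $\tfrac12\int_{{\Bbb R}^{2}}F(x,y)u_{t}(y)\,dy\,dx-\tfrac12\int_{{\Bbb R}^{2}}F(x,y)u_{t}(x)\,dy\,dx$ and relabeling $x\leftrightarrow y$ in the first integral (licensed by the absolute integrability above, which allows Fubini), the first integral becomes $\tfrac12\int_{{\Bbb R}^{2}}F(y,x)u_{t}(x)\,dy\,dx=-\tfrac12\int_{{\Bbb R}^{2}}F(x,y)u_{t}(x)\,dy\,dx$. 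Thus the potential derivative equals $-\int_{{\Bbb R}^{2}}F(x,y)u_{t}(x)\,dy\,dx=-\langle Ku,u_{t}\rangle$, and adding the kinetic contribution gives $E'(t)=\langle Ku,u_{t}\rangle-\langle Ku,u_{t}\rangle=0$ on $[0,T']$. Since $T'<T$ was arbitrary, $E$ is constant on $[0,T)$.

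The antisymmetry cancellation is the conceptually clean step; I expect the main obstacle to be the analytic bookkeeping that licenses differentiation under the integral and the variable swap. Both reduce to exhibiting a single $t$-locally-uniform $L^{1}({\Bbb R}^{2})$ dominating function, and the estimate $|w(\eta)|\le M(R)|\eta|$ together with $u,u_{t}\in L^{1}\cap L^{\infty}\subset L^{2}$ and Lemma \ref{lem2.1} supply exactly that.
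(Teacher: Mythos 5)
Your proof is correct and takes essentially the same route as the paper's: the heart of both arguments is the antisymmetry of $\alpha(y-x)\,w(u(y)-u(x))$ under $x\leftrightarrow y$ (using $\alpha$ even, $w$ odd), which the paper exploits after multiplying (\ref{cau1}) by $u_{t}$ and integrating in $x$, and which you exploit after differentiating the potential term directly---the same computation read in opposite directions. Your explicit dominating function licensing the differentiation under the integral sign and the Fubini variable swap simply makes rigorous what the paper's proof leaves implicit.
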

\begin{proof}
    By Theorem \ref{theo2.2} with $p=1$ we know $u\in C^{2}([0,T], L^{1}({\Bbb R})\cap L^{\infty}({\Bbb R}))$. Since
    $L^{1}({\Bbb R})\cap L^{\infty}({\Bbb R}) \subset L^{2}({\Bbb R})$, we have $u_{t}(t)\in L^{2}({\Bbb R})$.
    Moreover, an estimate similar to (\ref{Kbound}) where  $w$ is replaced by $W$ shows that the term
    $\alpha(y-x) W(u(y,t)-u(x,t))$ is integrable on ${\Bbb R}^{2}$. Hence $E(t)$ is defined for all $t\in [0,T)$.
    Multiplying (\ref{cau1}) by $u_{t}\left( x,t\right) $ and integrating in $x$ we obtain
    \begin{eqnarray*}
        \int_{\Bbb R}u_{tt}\left( x\right) u_{t}\left( x\right) dx
        & = & \int_{{\Bbb R}^{2}}\alpha \left( y-x\right)
            w\left( u\left( y\right) -u\left( x\right) \right) u_{t}\left( x\right) dy dx \\
        & = &\frac{1}{2}\int_{{\Bbb R }^{2}}\alpha \left(y-x\right)
            w\left( u\left( y\right) -u\left( x\right) \right) u_{t}\left(x\right) dy dx \\
        &&  +\frac{1}{2}\int_{{\Bbb R }^{2}}\alpha \left( y-x\right)
            w\left( u\left( y\right) -u\left( x\right) \right)u_{t}\left( x\right) dy dx,
    \end{eqnarray*}
    where we have again suppressed $t$.
    We now change the order of integration and switch the variables $x,y$ in the last integral to obtain
    \begin{displaymath}
        \frac{1}{2}\int_{{\Bbb R }^{2}}\alpha \left(x-y\right)
            w\left( u\left( x\right) -u\left( y\right) \right) u_{t}\left(y\right) dy dx.
    \end{displaymath}
    Since $\alpha $ is even while $w$ is odd, this gives
    \begin{displaymath}
        -\frac{1}{2}\int_{{\Bbb R }^{2}}\alpha \left(y-x\right)
            w\left( u\left( y\right) -u\left( x\right) \right) u_{t}\left(y\right) dy dx,
    \end{displaymath}
    so that
    \begin{displaymath}
        \int_{\Bbb R }u_{tt}\left( x\right) u_{t}\left( x\right) dx
        =-\frac{1}{2}\int_{{\Bbb R }^{2}}\alpha \left(y-x\right)
        w\left( u\left( y\right) -u\left( x\right) \right) \left(u_{t}\left( y\right) -u_{t}\left( x\right) \right) dy dx.
    \end{displaymath}
    But since $W^{\prime }=w$; we have
    \begin{displaymath}
        \frac{d}{dt}\frac{1}{2}\int_{\Bbb R }\left( u_{t}\left( x\right)\right)^{2}dx
            =-\frac{d}{dt}\frac{1}{2}\int_{{\Bbb R }^{2}}\alpha \left( y-x\right)
            W\left( u\left( y\right)-u\left( x\right) \right) dy dx
    \end{displaymath}
    so that $\frac{dE}{dt}=0$.
\end{proof}
\begin{theorem}\label{theo4.3}
    Assume that $\alpha \in L^{1}({\Bbb R})\cap L^{\infty}({\Bbb R})$ is even with $\alpha \geq 0$  almost everywhere;
    $w\in C^{1}({\Bbb R})$ is odd with $w(0)=0$ and $W \geq 0$.  If there is some $q\geq \frac{4}{3}$ and
    $C>0$ so that
    \begin{equation}
     \left\vert w(\eta) \right\vert^{q}\leq C W(\eta) \label{q}
    \end{equation}
    for all $\eta \in {\Bbb R}$,
    then there is a global solution for initial data  $\varphi, \psi \in L^{1} ({\Bbb R}) \cap L^{\infty} ({\Bbb R})$.
\end{theorem}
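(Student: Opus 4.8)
The plan is to rule out finite-time blow-up by means of the $L^\infty$ continuation criterion of Theorem \ref{theo4.1}: it suffices to prove that on every interval $[0,T)$ inside the maximal interval of existence the norm $\Vert u(t)\Vert_\infty$ stays bounded. The new ingredient is the conserved energy of Lemma \ref{lem4.1}, and the assumptions $\alpha\ge0$, $W\ge0$ make \emph{both} terms of $E(t)$ nonnegative. Since $\varphi,\psi\in L^1({\Bbb R})\cap L^\infty({\Bbb R})\subset L^2({\Bbb R})$, Lemma \ref{lem4.1} applies and $E(0)$ is finite, so from $E(t)=E(0)$ I read off the two uniform a priori bounds
\[
\tfrac12\Vert u_t(t)\Vert_2^2\le E(0),\qquad \tfrac12\int_{{\Bbb R}^2}\alpha(y-x)\,W\big(u(y,t)-u(x,t)\big)\,dy\,dx\le E(0).
\]

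Next I would turn the bound on the potential energy into a bound on $Ku$ via the structural hypothesis (\ref{q}). For fixed $x$, splitting $\alpha=\alpha^{1/q'}\alpha^{1/q}$ with $1/q+1/q'=1$ and applying H\"older's inequality in $y$ gives
\[
|(Ku)(x,t)|\le \Vert\alpha\Vert_1^{1/q'}\Big(\int_{\Bbb R}\alpha(y-x)\,\big|w(u(y)-u(x))\big|^{q}\,dy\Big)^{1/q}\le \Vert\alpha\Vert_1^{1/q'}C^{1/q}\,g(x,t)^{1/q},
\]
where $g(x,t)=\int_{\Bbb R}\alpha(y-x)W(u(y)-u(x))\,dy\ge0$. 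Raising to the power $q$ and integrating in $x$, the double integral that appears equals $\int_{{\Bbb R}^2}\alpha(y-x)W\,dy\,dx\le 2E(0)$, so $\Vert Ku(t)\Vert_q\le B$ with a constant $B$ independent of $t$. Integrating $u_{tt}=Ku$ twice and using Minkowski's inequality yields polynomial-in-$t$ control of $\Vert u_t(t)\Vert_q$ and $\Vert u(t)\Vert_q$, while the first energy bound gives $\Vert u(t)\Vert_2\le\Vert\varphi\Vert_2+t\sqrt{2E(0)}$; by interpolation $\Vert u(t)\Vert_r$ is then bounded on $[0,T)$ for all $r$ between $q$ and $2$. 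This is precisely where the threshold $q\ge\frac43$ enters: for $w(\eta)=|\eta|^{\nu-1}\eta$ one has $q=1+1/\nu$, so $q\ge\frac43$ corresponds exactly to at-most-cubic growth $\nu\le3$.

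The hard part will be the final upgrade from these integrability bounds to the uniform $L^\infty$ bound that Theorem \ref{theo4.1} actually demands. The difficulty is structural: because $\alpha$ is an integrable (nonsingular) kernel, the potential energy controls no derivative or Sobolev norm of $u$, and $L^2\cap L^q$ with $q\le2$ does not embed into $L^\infty$; worse, the pointwise estimate $\Vert Ku\Vert_\infty\lesssim \Vert u\Vert_\infty+\Vert u\Vert_\infty^3$ is genuinely superlinear, the local term $-Au^{3}$ of the cubic expansion (\ref{peridynamic3}) being impossible to absorb, so a naive Gronwall argument in $\Vert u\Vert_\infty$ permits blow-up. I therefore expect the crux to be an estimate that feeds the energy-controlled quantity $g(\cdot,t)\in L^1({\Bbb R})$ back into the integral equation (\ref{inteq}) together with the extra hypothesis $\alpha\in L^\infty({\Bbb R})$, so as to dominate $\Vert u(t)\Vert_\infty$ by a quantity growing at most linearly in the already-controlled norms. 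Once a uniform bound $\Vert u(t)\Vert_\infty\le R(T)$ on $[0,T)$ is secured, Theorem \ref{theo4.1} excludes blow-up at any finite $T$ and the solution continues globally.
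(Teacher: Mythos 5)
Your setup is right---the $L^\infty$ continuation criterion of Theorem \ref{theo4.1} plus the conserved, coercive energy of Lemma \ref{lem4.1}---and your H\"older estimate $\Vert (Ku)(t)\Vert_q \le \Vert\alpha\Vert_1^{1/q'}C^{1/q}(2E(0))^{1/q}$ is correct as far as it goes. But the proof stops exactly where the theorem is decided: you never produce the uniform bound on $\Vert u(t)\Vert_\infty$, and you say so yourself. This is a genuine gap, not a routine completion; as you correctly diagnose, $L^2\cap L^q$ information with $q\le 2$ cannot reach $L^\infty$, and the naive estimate $\Vert Ku\Vert_\infty\lesssim\Vert u\Vert_\infty M(\Vert u\Vert_\infty)$ is superlinear, so no Gronwall argument closes at that level. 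Note also that in the part you did carry out, the hypothesis $q\ge\frac{4}{3}$ is never used (the $L^q$ bound on $Ku$ holds for any $q>1$), so your explanation of where the threshold enters---interpolation between $L^q$ and $L^2$---mislocates it.

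The idea you are missing is to \emph{localize} the energy rather than integrate it. The paper sets
\begin{displaymath}
e(x,t)=\tfrac{1}{2}\left(u_t(x,t)\right)^2+\int_{\Bbb R}\alpha(y-x)\,W\left(u(y,t)-u(x,t)\right)dy,
\end{displaymath}
so that, by the equation, the local contribution cancels and $e_t(x,t)=\int_{\Bbb R}\alpha(y-x)\,w\left(u(y,t)-u(x,t)\right)u_t(y,t)\,dy$. Writing $\alpha\le\Vert\alpha\Vert_\infty^{1/p}\alpha^{1/q}$ (this is the real use of $\alpha\in L^\infty({\Bbb R})$, not a convolution-smoothing effect), splitting $|u_t(y)|=|u_t(y)|^{2/p}|u_t(y)|^{1-2/p}$ with $1/p+1/q=1$, and applying H\"older together with (\ref{q}), the three energy-controlled quantities $\Vert u_t(t)\Vert_2^2\le 2E(0)$, $u_t^2\le 2e$, and $\int_{\Bbb R}\alpha(y-x)W\,dy\le e(x,t)$ yield
\begin{displaymath}
\Vert e_t(t)\Vert_\infty\le C\,\Vert e(t)\Vert_\infty^{\,r},\qquad r=\tfrac{1}{2}-\tfrac{1}{p}+\tfrac{1}{q}=\tfrac{2}{q}-\tfrac{1}{2},
\end{displaymath}
and $r\le 1$ precisely when $q\ge\frac{4}{3}$: this is where the threshold does its work, making the integral inequality $\Vert e(t)\Vert_\infty\le\Vert e(0)\Vert_\infty+C\int_0^t\Vert e(\tau)\Vert_\infty^r\,d\tau$ amenable to Gronwall (via $\Vert e\Vert_\infty^r\le\Vert e\Vert_\infty+1$). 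This bounds $\Vert e(t)\Vert_\infty$, hence $\Vert u_t(t)\Vert_\infty$, hence $\Vert u(t)\Vert_\infty\le\Vert\varphi\Vert_\infty+\int_0^t\Vert u_t(\tau)\Vert_\infty d\tau$ on $[0,T)$, and Theorem \ref{theo4.1} gives global existence. Your guess that the crux feeds $g(\cdot,t)\in L^1({\Bbb R})$ back into the integral equation points in the wrong direction: the mechanism is a pointwise-in-$x$ differential inequality for the energy density, with the global energy entering only through the factor $\Vert u_t(t)\Vert_2^{2/p}$; your $\Vert Ku\Vert_q$ computation, though valid, is a detour the paper's proof never needs.
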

\begin{proof}
    Assume that the solution exists in $[0,T)$. By Lemma \ref{lem4.1} the energy is finite and the energy
    identity $E(t)=E(0)$ holds for all $t\in [0,T)$.
    Consider the energy density function
    \begin{displaymath}
        e(x,t) =\frac{1}{2}(u_{t}(x,t))^{2}
            + \int_{{\Bbb R}}\alpha (y-x)~W\left( u\left( y,t\right) -u\left( x,t\right) \right) dy.
    \end{displaymath}
    Differentiating with respect to $t$
    \begin{eqnarray*}
        e_{t}(x,t)
        &=& u_{t}(x,t) u_{tt}(x,t)
            +\int_{\Bbb R}\alpha (y-x)~w( u(y,t)-u(x,t)) (u_{t}(y,t)-u_{t}(x,t)) dy \\
        &=&\int_{\Bbb R}\alpha (y-x)~w(u(y,t)-u(x,t)) u_{t}(y,t) dy.
    \end{eqnarray*}
    Note that by the assumptions of the theorem  $e(x,t)$ and $e_{t}(x,t)$ are in $L^{\infty}({\Bbb R})$
    for each fixed $t$.   Letting $p$ be the dual index to $q$; i.e. $1/p+1/q=1$, we have
    \begin{eqnarray*}
        \left\vert e_{t}\left( x,t\right) \right\vert
        &\leq & \int_{\Bbb R}\alpha (y-x)~\left\vert w(u(y,t)-u(x,t))
            \right\vert \left\vert u_{t}(y,t) \right\vert dy \\
        &\leq & \left\Vert \alpha \right\Vert_{\infty}^{1/p}\left\Vert u_{t}(t)
            \right\Vert_{\infty}^{1-2/p}\int_{\Bbb R}\left\vert u_{t}(y,t)
            \right\vert^{2/p}(\alpha(y-x))^{1/q}~\left\vert w(u(y,t)-u(x,t)) \right\vert dy,
    \end{eqnarray*}
    and by H\"{o}lder's inequality
    \begin{displaymath}
    \left\vert e_{t}(x,t) \right\vert
    \leq \left\Vert \alpha \right\Vert_{\infty}^{1/p}\left\Vert u_{t}(t)
        \right\Vert_{\infty}^{1-2/p}\left(\int_{\Bbb R}\left\vert u_{t}(y,t)\right\vert^{2}dy\right)^{1/p}
        \left( \int_{\Bbb R}\alpha (y-x)~\left\vert w(u(y,t)-u(x,t))\right\vert^{q}dy\right)^{1/q}.
    \end{displaymath}
    Using the condition (\ref{q}) we have
    \begin{displaymath}
        \left\vert e_{t}(x,t) \right\vert
        \leq \left\Vert \alpha \right\Vert_{\infty }^{1/p}\left\Vert u_{t}(t) \right\Vert_{\infty }^{1-2/p}
            \left\Vert u_{t}(t) \right\Vert_{2}^{2/p}\left( C\int_{\Bbb R}\alpha (y-x)~W\left( u\left( y,t\right)
            -u\left( x,t\right) \right)dy\right)^{1/q}.
    \end{displaymath}
    Since $\alpha \geq 0$ and $W\geq 0,$ by the energy identity we have
    $\left\Vert u_{t}\left( t\right) \right\Vert _{2}^{2}\leq 2 E\left( 0\right)$.
    Also, both terms in $e(x,t)$ are nonnegative so that taking essential supremum over $x\in {\Bbb R}$,
    \begin{eqnarray*}
        \left\Vert e_{t}\left( t\right) \right\Vert _{\infty }
        &\leq & \left\Vert \alpha \right\Vert _{\infty }^{1/p}(2E(0))^{1/p}
            \left(2\left\Vert e\left( t\right) \right\Vert_{\infty}\right)^{1/2-1/p}
            (C\left\Vert e \left( t\right) \right\Vert_{\infty })^{1/q} \\
        &\leq &C\left\Vert e \left( t\right) \right\Vert_{\infty }^{r}
    \end{eqnarray*}
    with $r=1/2-1/p+1/q=2/q-1/2$ and some other constant $C$ in the last line. Note that when $q\geq 4/3$,
    $r=\allowbreak 2/q-1/2\leq 1$. Since
    \begin{displaymath}
    e\left( x,t\right) =e\left( x,0\right) +\int_{0}^{t}e_{t}(x,\tau) d\tau
    \end{displaymath}
    we have
    \begin{eqnarray*}
        \left\Vert e\left( t\right) \right\Vert _{\infty }
        &\leq &\left\Vert e(0) \right\Vert _{\infty }+\int_{0}^{t}\left\Vert e_{t}(\tau)
            \right\Vert _{\infty }d\tau  \\
        &\leq &\left\Vert e(0) \right\Vert_{\infty }+C\int_{0}^{t}\left\Vert e(\tau) \right\Vert_{\infty }^{r}d\tau,
    \end{eqnarray*}
    for all  $t\in [ 0,T)$. As $r\leq 1$, we have $\left\Vert e\left( t\right)\right\Vert_\infty^r \leq \left\Vert e\left( t\right)\right\Vert_\infty +1$. By Gronwall's lemma $\left\Vert e\left( t\right) \right\Vert_{\infty }$ and thus $\left\Vert u_{t}\left( t\right) \right\Vert_{\infty}$  stay bounded in $[0,T)$. Integration again gives
    \begin{displaymath}
        \left\Vert u\left( t\right) \right\Vert _{\infty }
        \leq \left\Vert \varphi \right\Vert_{\infty }+\int_{0}^{t}\left\Vert u_{t}(\tau)\right\Vert _{\infty }d\tau
    \end{displaymath}
    so that $\left\Vert u(t) \right\Vert_{\infty }$ does not blow up in finite time.
\end{proof}
\begin{remark}
    Considering the typical nonlinearity $w(\eta)=|\eta|^{\nu -1}\eta$ we have
    $W(\eta)={1\over {\nu +1}}|\eta|^{\nu +1}$. Then the exponent $q$ of Theorem \ref{theo4.3} equals
    $(\nu +1)/\nu$ and $q \ge {4\over 3}$ if and only if $\nu \le 3$. In other words Theorem
    \ref{theo4.3} applies to at most cubic nonlinearities.
\end{remark}

\subsection{Blow-up}
\label{sec:42}
In this section, we will consider the blow-up of the solution for the Cauchy problem
(\ref{cau1})-(\ref{cau2}) by the concavity method. For this purpose, we will use the following
lemma to prove  blow up in finite time.
\begin{lemma}\label{lem4.3} \cite{varga}
    Suppose $H\left( t\right)$, $t\geq 0$ is a positive, twice differentiable function satisfying
    $H^{\prime \prime }(t)H(t)-\left( 1+\nu \right) \left(H^{\prime }(t)\right)^{2}\geq 0$ where $\nu >0$.
    If $H\left( 0\right) >0$ and $H^{\prime }\left( 0\right) >0$, then
    $H\left( t\right) \rightarrow \infty $ as $t\rightarrow t_{1}$ for some
    $t_{1}\leq H\left( 0\right) /\nu H^{\prime }\left( 0\right) $.
\end{lemma}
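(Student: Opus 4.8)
The plan is to linearize the nonlinear differential inequality by introducing the auxiliary function $G(t)=H(t)^{-\nu}$, which is well defined and positive because $H>0$. The hypothesis $H''H-(1+\nu)(H')^{2}\ge 0$ is exactly the condition that renders $G$ concave. Differentiating twice gives
\begin{displaymath}
    G''(t)=-\nu\,H(t)^{-\nu-2}\bigl(H(t)H''(t)-(\nu+1)(H'(t))^{2}\bigr),
\end{displaymath}
and since $H>0$ while the bracketed quantity is nonnegative by assumption, I would conclude $G''(t)\le 0$ throughout the interval of existence; that is, $G$ is concave.

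Next I would record the initial data for $G$. From $G=H^{-\nu}$ one gets $G(0)=H(0)^{-\nu}>0$ and $G'(0)=-\nu H(0)^{-\nu-1}H'(0)<0$, where the signs follow from $H(0)>0$ and $H'(0)>0$. Since a concave function lies below each of its tangent lines, the tangent at $t=0$ yields
\begin{displaymath}
    G(t)\le G(0)+G'(0)\,t
\end{displaymath}
for every $t\ge 0$ at which $G$ is defined. The right-hand side is affine with negative slope and vanishes precisely at $t_{1}^{*}=-G(0)/G'(0)=H(0)/(\nu H'(0))$, which is exactly the claimed bound on the blow-up time.

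The conclusion then comes from a continuation argument. As long as $H$ remains finite and positive, $G=H^{-\nu}$ is strictly positive; but the tangent-line bound forces $G(t)\le 0$ once $t$ reaches $t_{1}^{*}$. Hence $G$ must already decrease to $0$ at some $t_{1}\le t_{1}^{*}$, and $G(t)\to 0^{+}$ is equivalent to $H(t)\to\infty$ as $t\to t_{1}^{-}$. I expect the only genuinely delicate point to be this last step: one must argue that $G$ hitting the level $0$ corresponds to finite-time blow-up of $H$ rather than to some other breakdown. This is clean here because $G$ is continuous and strictly positive wherever $H$ is positive and finite, so the first time at which the affine bound would force $G\le 0$ is necessarily a time where $G\to 0^{+}$, i.e. $H\to+\infty$; combined with $t_{1}\le t_{1}^{*}=H(0)/(\nu H'(0))$, this gives the asserted estimate on the blow-up time.
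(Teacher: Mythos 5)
Your proof is correct: the substitution $G=H^{-\nu}$, the computation $G''=-\nu H^{-\nu-2}\bigl(HH''-(\nu+1)(H')^{2}\bigr)\le 0$, the tangent-line bound $G(t)\le G(0)+G'(0)t$ with $G'(0)=-\nu H(0)^{-\nu-1}H'(0)<0$, and the continuation argument (note $G'\le G'(0)<0$ makes $G$ strictly decreasing, so $G\to 0^{+}$, i.e. $H\to\infty$, at some $t_{1}\le -G(0)/G'(0)=H(0)/\nu H'(0)$) together constitute the classical concavity-method proof. The paper itself gives no proof of this lemma --- it is quoted from Kalantarov and Ladyzhenskaya \cite{varga} --- and your argument is essentially the standard one from that source, so there is nothing in the paper to diverge from.
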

\begin{theorem}
    Suppose that $\alpha$ is even, $w$ is odd, the conditions of Theorem \ref{theo2.2} hold for $p=1$ and $\alpha \ge 0$  almost everywhere.
    If there is some   $\nu >0$ such that
    \begin{displaymath}
        \eta w\left( \eta\right) \leq 2\left( 1+2\nu \right) W\left( \eta\right) ~\mbox{ for all }~\eta\in {\Bbb R},
    \end{displaymath}
    and
    \begin{displaymath}
        E\left( 0\right) =\frac{1}{2}\left\Vert \psi \right\Vert_{2}^{2}
            +\frac{1}{2}\int_{{\Bbb R }^{2}} \alpha(y-x)~W\left(\varphi(y) -\varphi(x)\right) dy dx<0,
    \end{displaymath}
    then the solution $u$ of the Cauchy problem (\ref{cau1})-(\ref{cau2}) blows up in finite time.
\end{theorem}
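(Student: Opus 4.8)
The plan is to use the concavity method through Lemma \ref{lem4.3}, with the choice $H(t)=\left\Vert u(t)\right\Vert_2^2$. Since the hypotheses include those of Theorem \ref{theo2.2} with $p=1$, the solution lies in $C^2([0,T],L^1({\Bbb R})\cap L^\infty({\Bbb R}))\subset C^2([0,T],L^2({\Bbb R}))$ for every $T<T_{\max}$, so $H$ is twice differentiable with $H'(t)=2\left\langle u,u_t\right\rangle$ and $H''(t)=2\left\Vert u_t\right\Vert_2^2+2\left\langle u,u_{tt}\right\rangle$. I would first record that $H(t)>0$ on $[0,T_{\max})$: if $u(t_0)\equiv 0$ for some $t_0$, the energy identity of Lemma \ref{lem4.1} would force $E(0)=E(t_0)=\frac12\left\Vert u_t(t_0)\right\Vert_2^2\ge 0$, contradicting $E(0)<0$.

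The core is a lower bound for $H''$. Using $u_{tt}=Ku$ together with the symmetrization already performed in the proof of Lemma \ref{lem4.1} (here $\alpha$ even, $w$ odd), one obtains $\left\langle u,u_{tt}\right\rangle=-\frac12\int_{{\Bbb R}^2}\alpha(y-x)\,\eta\,w(\eta)\,dy\,dx$ with $\eta=u(y,t)-u(x,t)$. The structural hypothesis $\eta w(\eta)\le 2(1+2\nu)W(\eta)$ combined with $\alpha\ge 0$ then gives $\left\langle u,u_{tt}\right\rangle\ge -(1+2\nu)\int_{{\Bbb R}^2}\alpha(y-x)W(\eta)\,dy\,dx=-2(1+2\nu)\bigl(E(0)-\tfrac12\left\Vert u_t\right\Vert_2^2\bigr)$, where the last equality uses $E(t)=E(0)$. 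Substituting into $H''$ yields
\begin{displaymath}
    H''(t)\ge 4(1+\nu)\left\Vert u_t(t)\right\Vert_2^2-4(1+2\nu)E(0).
\end{displaymath}

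Next I would invoke Cauchy--Schwarz, $\bigl(H'(t)\bigr)^2=4\left\langle u,u_t\right\rangle^2\le 4\left\Vert u\right\Vert_2^2\left\Vert u_t\right\Vert_2^2=4H(t)\left\Vert u_t\right\Vert_2^2$, so that the kinetic terms cancel exactly:
\begin{displaymath}
    H''(t)H(t)-(1+\nu)\bigl(H'(t)\bigr)^2\ge -4(1+2\nu)E(0)\,H(t)\ge 0,
\end{displaymath}
the final inequality holding because $E(0)<0$ and $H>0$. This is precisely the differential inequality demanded by Lemma \ref{lem4.3}; indeed the constant $2(1+2\nu)$ in the hypothesis is tuned exactly so that the exponent comes out as $1+\nu$.

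It remains to launch the lemma, and this is where the only genuine subtlety lies: the hypotheses do not control the sign of $H'(0)=2\left\langle\varphi,\psi\right\rangle$. However, the bound above also shows $H''(t)\ge -4(1+2\nu)E(0)=:c_0>0$, so $H'$ is strictly increasing with $H'(t)\ge H'(0)+c_0 t$. If $H'(0)>0$ one applies Lemma \ref{lem4.3} directly; otherwise put $t_0=-H'(0)/c_0$, and note that either $T_{\max}\le t_0$ (so the solution already fails to be global) or $T_{\max}>t_0$, in which case $H'(t_0)>0$, $H(t_0)>0$, and the inequality holds on $[t_0,T_{\max})$, whence Lemma \ref{lem4.3} with time origin shifted to $t_0$ forces $H(t)\to\infty$ at some finite $t_1$. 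In every case $H(t)=\left\Vert u(t)\right\Vert_2^2\to\infty$ in finite time; since $\left\Vert u(t)\right\Vert_2^2\le\left\Vert u(t)\right\Vert_1\left\Vert u(t)\right\Vert_\infty<\infty$ while the $L^1\cap L^\infty$ solution persists, this forces $T_{\max}<\infty$, and Theorem \ref{theo4.1} then delivers finite-time blow-up. I expect the main obstacle to be precisely this bookkeeping around the sign of $H'(0)$ and the matching of the $L^2$ blow-up to the $L^\infty$ blow-up criterion; the differential-inequality computation itself is essentially forced once the symmetrization and the energy identity are in place.
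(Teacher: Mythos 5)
Your proof is correct, but it takes a genuinely different route through the concavity method than the paper does. The paper works with the augmented functional $H(t)=\Vert u(t)\Vert_2^2+b(t+t_0)^2$: the extra term makes $H$ positive automatically and, by choosing $t_0$ large, forces $H'(0)=2\langle\varphi,\psi\rangle+2bt_0>0$, so Lemma \ref{lem4.3} applies at once; the price is that Cauchy--Schwarz no longer cancels the kinetic terms exactly, so the paper must estimate the cross term $2\Vert u\Vert_2\Vert u_t\Vert_2\, b(t+t_0)\le b\Vert u\Vert_2^2+b\Vert u_t\Vert_2^2(t+t_0)^2$ and then tune $b\le -2E(0)$ --- which is exactly where $E(0)<0$ enters --- to get $H''H-(1+\nu)(H')^2\ge -2(1+2\nu)\left(b+2E(0)\right)H\ge 0$. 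You instead take the bare $H(t)=\Vert u(t)\Vert_2^2$, for which Cauchy--Schwarz cancels the kinetic terms exactly and $E(0)<0$ enters through the clean bound $H''H-(1+\nu)(H')^2\ge -4(1+2\nu)E(0)H$; the cost is the two pieces of bookkeeping that the augmentation buys for free, and you supply both correctly: positivity of $H$ via the energy identity (if $u(t_0)\equiv 0$ then $E(0)=E(t_0)=\frac{1}{2}\Vert u_t(t_0)\Vert_2^2\ge 0$, a point the paper never needs to address), and the sign of $H'$ via the uniform bound $H''\ge -4(1+2\nu)E(0)=c_0>0$, which lets you shift the time origin to where $H'$ is positive. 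One epsilon of care: at $t_0=-H'(0)/c_0$ your bound only gives $H'(t_0)\ge 0$, not $H'(t_0)>0$; take any slightly later base point (where $H'(t)\ge c_0(t-t_0)>0$) and the argument is airtight. Your closing step --- $L^2$ blow-up is incompatible with $\Vert u(t)\Vert_2^2\le\Vert u(t)\Vert_1\Vert u(t)\Vert_\infty<\infty$ on $[0,T_{\max})$, hence $T_{\max}<\infty$, and Theorem \ref{theo4.1} upgrades this to $L^\infty$ blow-up --- is in fact a slightly more careful rendering of the paper's concluding contradiction with an assumed global solution, and both arguments use the same symmetrization (from Lemma \ref{lem4.1}) and the same role for the structural hypothesis $\eta w(\eta)\le 2(1+2\nu)W(\eta)$.
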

\begin{proof}
    Assume that there is a global solution. Then
    $u(t), u_{t}(t)\in L^{1}({\Bbb R})\cap L^{\infty}({\Bbb R})\subset L^{2}({\Bbb R})$ for all $t>0$. Let
    $H\left( t\right) =\left\Vert u(t)\right\Vert_{2}^{2}+b\left( t+t_{0}\right) ^{2}$ for some positive constants $b$
    and  $t_{0}$ to be determined later. Suppressing the $t$ variable throughout the computations
    \begin{eqnarray*}
        H^{\prime }(t)  &=&2\left\langle u,u_{t}\right\rangle+2b\left( t+t_{0}\right)  \\
        H^{\prime \prime }(t) &=&2\left\Vert u_{t}\right\Vert_{2}^{2}
            +2\left\langle u,u_{tt}\right\rangle +2b.
    \end{eqnarray*}
    Using (\ref{cau1})
    \begin{eqnarray*}
        2\left\langle u,u_{tt}\right\rangle &=& 2
        \int_{{\Bbb R }^{2}} \alpha(y-x)~w\left( u(y)-u(x)\right)u(x) dy dx
        \nonumber \\
        &=& \int_{{\Bbb R }^{2}}\alpha \left(y-x\right)
            w\left( u\left( y\right) -u\left( x\right) \right) u\left(x\right) dy dx \\
        &&  +\int_{{\Bbb R }^{2}}\alpha \left( y-x\right)
            w\left( u\left( y\right) -u\left( x\right) \right)u\left( x\right) dy dx.
        \end{eqnarray*}
    Interchanging the variables $x$ and $y$ in the second  integral and noting that $\alpha$ is even and $w$ is odd
    we get
    \begin{eqnarray*}
        2\left\langle u,u_{tt}\right\rangle
        &=& \int_{{\Bbb R }^{2}}  \alpha(y-x)~w\left( u(y)-u(x)\right) u(x) dy dx \\
        && -\int_{{\Bbb R }^{2}}  \alpha(y-x)~w\left( u(y)-u(x)\right) u(y) dy dx  \\
        &=& -\int_{{\Bbb R }^{2}}  \alpha(y-x)~w\left( u(y)-u(x)\right) (u(y)-u(x)) dy dx.
    \end{eqnarray*}
    So that
    \begin{eqnarray*}
     2\left\langle u,u_{tt}\right\rangle
        &\geq & -2\left( 1+2\nu \right) \int_{{\Bbb R }^{2}} \alpha(y-x)~W\left( u(y)-u(x)\right) dy dx \\
        &=&4\left( 1+2\nu \right) ( \frac{1}{2}\left\Vert u_{t}\right\Vert_{2}^{2}-E( 0) ).
    \end{eqnarray*}
    Hence we get
    \begin{displaymath}
        H^{\prime \prime }\left( t\right)
            \geq 4\left( 1+\nu \right)  \left\Vert u_{t}\right\Vert_{2}^{2}-4\left( 1+2\nu \right)E\left( 0\right)+2b.
    \end{displaymath}%
    On the other hand, we have
    \begin{eqnarray*}
        \left( H^{\prime }\left( t\right)\right)^{2}
        &=&4 \left[\left\langle u,u_{t}\right\rangle+b\left( t+t_{0}\right) \right]^{2}  \\
        &\leq &4\left[ \left\Vert u\right\Vert_{2}\left\Vert u_{t}\right\Vert_{2}
                +b\left( t+t_{0}\right)\right]^{2} \\
        &= &4\left[ \left\Vert u\right\Vert_{2}^{2}\left\Vert u_{t}\right\Vert_{2}^{2}
                +2\left\Vert u\right\Vert_{2} \left\Vert u_{t}\right\Vert_{2} b\left( t+t_{0}\right)
                +b^{2}\left( t+t_{0}\right)^{2}\right)] \\
        &\leq &4\left[ \left\Vert u\right\Vert_{2}^{2}\left\Vert u_{t}\right\Vert_{2}^{2}
                +b\left\Vert u\right\Vert_{2}^{2}
                +b\left\Vert u_{t}\right\Vert_{2}^{2}\left( t+t_{0}\right)^{2}
                +b^{2}\left( t+t_{0}\right)^{2}\right].
    \end{eqnarray*}
    Thus
    \begin{eqnarray*}
        H^{\prime \prime }\left( t\right) H\left( t\right) &-&\left( 1+\nu \right)
            \left( H^{\prime }\left( t\right) \right)^{2} \\
        &\geq & \left[ 4\left( 1+\nu \right)  \left\Vert u_{t}\right\Vert_{2}^{2}-4\left( 1+2\nu \right)E(0)+2b\right]
             \left[ \left\Vert u\right\Vert_{2}^{2}+b\left( t+t_{0}\right)^{2}\right] \\
        &&-4\left( 1+\nu \right) \left[ \left\Vert u\right\Vert_{2}^{2} \left\Vert u_{t}\right\Vert_{2}^{2}
            +b \left\Vert u\right\Vert_{2}^{2}+b\left\Vert u_{t}\right\Vert_{2}^{2} \left( t+t_{0}\right)^{2}
            +b^{2}\left( t+t_{0}\right)^{2}\right]\\
        &=&\left[-4\left( 1+2\nu \right) E(0)+2b-4b\left( 1+\nu \right)\right]
           \left[ \left\Vert u\right\Vert_{2}^{2}+b\left( t+t_{0}\right)^{2}\right]  \\
        &=&-2\left( 1+2\nu \right) \left(b+2E(0)\right) H(t).
    \end{eqnarray*}
    Now if we choose $b\leq -2E(0)$, this gives
    \begin{displaymath}
        H^{\prime \prime }\left( t\right) H\left( t\right)
            -\left( 1+\nu \right)\left( H^{\prime }\left( t\right) \right) ^{2}\ge 0.
    \end{displaymath}
    Moreover
    \begin{displaymath}
        H^{\prime }\left( 0\right) =2\left\langle \varphi ,\psi \right\rangle+2bt_{0} > 0
    \end{displaymath}
    for sufficiently large $t_{0}$. According to  Lemma \ref{lem4.3}, this implies
    that $H\left( t\right)$, and thus $\left\Vert u(t)\right\Vert_{2}^{2}$ blows
    up in finite time contradicting the assumption that the global solution exists.
\end{proof}

\noindent
{\bf Acknowledgement}: This work has been supported by the Scientific and Technological Research Council of Turkey
(TUBITAK) under the project TBAG-110R002.

\end{document}